\newtheorem{theorem}{Theorem}
\newtheorem{proposition}[theorem]{Proposition}
\newtheorem{lemma}{Lemma}
\newtheorem{remark}{Remark}
\newtheorem{example}{Example}
\newtheorem*{main}{Main Theorem}
\def\qed{ \rule{.08in}{.08in}}
\newcommand{\diag}{\operatorname{diag}}
\newcommand{\Diag}{\operatorname{Diag}}
\newcommand{\R}{\mathbb{R}}
\newcommand{\bP}{\mathbb{P}}
\newcommand{\U}{\mathrm{Uni}}
\newcommand{\ce}{\mathcal{E}}
\newcommand{\conv}{\operatorname{conv}}
\newcommand{\cX}{\mathcal{X}}
\newcommand{\cA}{\mathcal{A}}
\newcommand{\bfo}{{\bf 1}}
\newcommand{\rank}{\operatorname{rank}}
\newcommand{\cI}{{\cal I}}
\newcommand{\cU}{{\cal U}}
\newcommand{\cV}{{\cal V}}
\newcommand{\as}{{\em a.s.}}
\newcommand{\rint}{\operatorname{int}}
\newcommand{\supp}{\operatorname{supp}}
\newcommand{\xc}[1]{\vspace{.3cm}

\noindent {\em #1} }
\newcommand{\xcb}[1]{\vspace{.0cm}

\noindent {\em #1} }
\newcommand{\mab}[1]{\vspace{.1cm}

\noindent {\bf #1} }
\newtheorem{definition}{Definition}
\def\blfootnote{\xdef\@thefnmark{}\@footnotetext}
\def\BibTeX{{\rm B\kern-.05em{\sc i\kern-.025em b}\kern-.08em
    T\kern-.1667em\lower.7ex\hbox{E}\kern-.125emX}}
\begin{document}

\title{Geometric Characterization of the $H$-property for Step-graphons}
\author{Mohamed-Ali Belabbas\footnote{M.-A.~Belabbas is with the Department of Electrical and Computer Engineering and the Coordinated Science Laboratory, University of Illinois, Urbana-Champaign. Email:  \texttt{belabbas@illinois.edu}.}\quad  and\quad Xudong Chen\footnote{X.~Chen is with the Department of Electrical, Computer, and Energy Engineering, University of Colorado Boulder. Email: \texttt{xudong.chen@colorado.edu}.}
}

\date{}

\maketitle

\begin{abstract}
\blfootnote{M.-A. Belabbas and X. Chen contributed equally to the manuscript in all categories.}
    In a recent paper~\cite{bcb2021h}, we have exhibited a set of conditions that are necessary for the $H$-property to hold for the class of step-graphons. In this paper, we prove that these conditions are essentially sufficient.\end{abstract}

\section{Introduction and Main Result}\label{sec:intro}
In~\cite{bcb2021h}, we introduced the so-called $H$-property for a graphon $W$ --- roughly speaking, it is the property that a graph $G$ sampled from $W$ admits a Hamiltonian decomposition \as.
The presence of a Hamiltonian decomposition in a graph underlies a number of important properties pertinent to structural system theory, such as structural controllability~\cite{chen2021sparse} and structural  stability~\cite{belabbas_algorithmsparse_2013,belabbas2013sparse}. See~\cite{bcb2021h} for details.  In that same paper, we have exhibited a set of conditions that were necessary for the $H$-property to hold for the class of step-graphons. We show in this paper that these conditions are also essentially (in a sense made precise below) sufficient.
\vspace{.2cm}

\noindent
{\bf $H$-property:} 
%To proceed, we first recall the $H$-property. 
We start by recalling the definitions of a graphon and its sampling procedure.  
A {\em graphon} is a symmetric, measurable function $W: [0,1]^2\to [0,1]$. Step-graphons, along with their partitions, are defined below:

\begin{definition}[Step-graphon and its partition]\label{def:stepgraphon}
A graphon $W$ is a {\bf step-graphon} if there exists an increasing sequence $0 = \sigma_0 < \sigma_1< \cdots < \sigma_q = 1$ such that $W$ is constant over each rectangle $[\sigma_{i}, \sigma_{i + 1})\times [\sigma_{j}, \sigma_{j + 1})$ for all $0\leq i, j\leq q-1$.  We call $ \sigma = (\sigma_0,\sigma_1,\ldots,\sigma_q)$ a {\bf partition} for $W$.
\end{definition}

Graphons can be used to sample undirected graphs. Other uses of graphons in system theory as limits of  adjacency matrices can be found in~\cite{gao2019graphon, gao2021linear,parise2021analysis}. In this paper, we denote by  $G_n \sim W$ graphs $G_n$ on $n$ nodes  sampled from a graphon $W$. The sampling procedure was introduced in~\cite{lovasz2006limits, borgs2008convergent} and is reproduced below: Let $\U[0,1]$ be the uniform distribution on $[0,1]$. Given a graphon $W$, a graph $G_n=(V,E)\sim W$ on  $n$ nodes is obtained as follows: 
\begin{enumerate}
    \item Sample $y_1,\ldots,y_n\sim \U[0,1]$ independently. 
    We call $y_i$ the {\em coordinate of node} $v_i\in V$. 

    \item For any two distinct nodes $v_i$ and $v_j$, place an edge $(v_i,v_j) \in E$ with probability $W(y_i,y_j)$.
\end{enumerate}
It should be clear that if $0\leq p\leq 1$ is a {\em constant} and $W(s,t)=p$ for all $(s,t)\in [0,1]^2$, then $G_n \sim W$ is  an Erd\H os-R\'enyi random graph with parameter~$p$. Consequently,  graphons can be seen as a way to introduce {\em inhomogeneity} in the edge densities between different pairs of nodes.

Let $W$ be a graphon and $G_n \sim W$. 
In the sequel, we use the notation $\vec G_n=(V,\vec E)$ to denote the {\em directed} version of $G_n$, defined by the edge set
\begin{equation}\label{eq:defvecE}
\vec E :=\{v_iv_j, v_jv_i \mid (v_i,v_j) \in E \}.
\end{equation}
In words, we replace an undirected edge $(v_i,v_j)$ with two directed edges $v_iv_j$ and $v_jv_i$. 
The directed graph $\vec G_n$ is said to have a {\em Hamiltonian decomposition} if it contains a subgraph $H$, with the same node set of $\vec G$, such that $H$ is a node disjoint union of directed cycles. See Fig.~\ref{fig:hamildecom} for illustration. 

\begin{figure}
    \centering
    \subfloat[\label{sfig1:Ggraph}]{
   \includegraphics{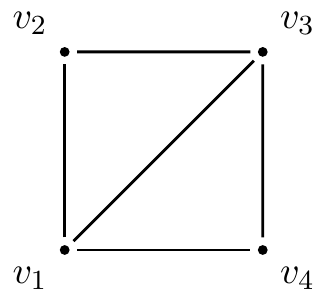}
}
\qquad
\subfloat[\label{sfig1:vecG}]{
  \includegraphics{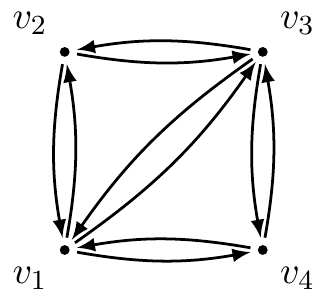}
}
\caption{{\em Left:} An undirected graph $G$ on $4$ nodes. {\em Right:} The directed graph $\vec G$ obtained from $G$ by replacing every undirected with two oppositely oriented edges. The cycle $D_1=v_1v_2v_3v_4v_1$ forms a Hamiltonian decomposition of $\vec G$. The two node disjoint cycles $D_2 = v_1v_2v_1$ and $D_3=v_3v_4v_2$ also form a Hamiltonian decomposition of~$\vec G$. }
    \label{fig:hamildecom}
\end{figure}

We now have the following definition:

\begin{definition}[$H$-property]\label{def:Hproperty}
Let $W$ be a graphon and $G_n\sim W$. 
Then, $W$ has the {\bf $H$-property} if 
\begin{equation}\label{eq:hproperty}
\lim_{n\to\infty}\mathbb{P}(\vec G_n \mbox{ has a Hamiltonian decomposition}) = 1.
\end{equation}
\end{definition}
We will see below that the $H$-property is essentially a ``{\em zero-one}'' property in a sense that the probability on the left hand side of~\eqref{eq:hproperty} converges to either $0$ or $1$. 

\vspace{.2cm}

\noindent 
{\bf Key objects:} We present three key objects associated with a step-graphon, namely, its concentration vector, its skeleton graph, and its associated edge polytope, all of which were introduced in~\cite{bcb2021h}.   

\begin{definition}[Concentration vector]
    Let $W$ be  a {step-graphon} with partition $\sigma = (\sigma_0,\ldots,\sigma_q)$. 
    The associated {\bf concentration vector} $x^* = (x^*_1,\ldots, x^*_q)$ has entries  defined as follows: 
    $x^*_i := \sigma_i - \sigma_{i-1}$, for all $i = 1,\ldots, q$. 
\end{definition}
It should be clear from the sampling procedure above that the concentration vector describes the proportion of sampled nodes in each interval $[\sigma_i,\sigma_{i+1})$. 

Given a step-graphon, its support can be described by a graph, which we call {\em skeleton graph}: 

\begin{definition}[Skeleton graph]\label{def:skeleton}
To a step-graphon $W$ with a partition $\sigma = (\sigma_0,\ldots, \sigma_q)$, we assign the  undirected graph $S = (U, F)$ on $q$ nodes, with $U =\{u_1,\ldots, u_q\}$ and edge set $F$ defined as follows: there is an edge between $u_i$ and $u_j$ if and only if $W$ is non-zero over $[\sigma_{i-1},\sigma_i)\times [\sigma_{j - 1}, \sigma_j)$. 
We call $S$ the {\bf skeleton graph} of $W$ for~$\sigma$.
\end{definition}

We illustrate the relationship between a step-graphon and its skeleton graph in Figure~\ref{fig:stepgraphon}.

\begin{figure}[t]
    \centering
    \subfloat[\label{sfig1:stepgraphon}]{
\includegraphics{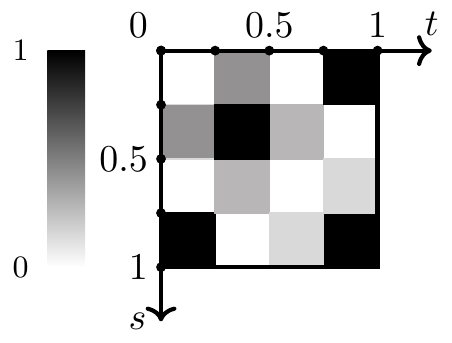}}
\quad
\subfloat[\label{sfig1:skeleton}]{
   \includegraphics{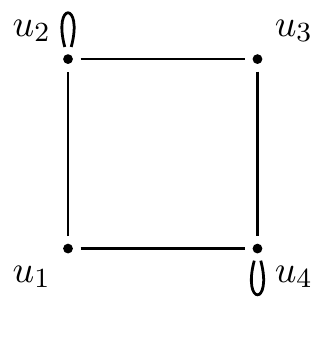}
}
\caption{ {\em Left:} A step-graphon $W$ with the partition $\sigma=(0,0.25,0.5,0.75,1)$, with the value coded by the grayscale on the left. {\em Right:} The associated skeleton graph $S$.}
    \label{fig:stepgraphon}
\end{figure}

Without loss of generality and for ease of presentation, we will consider throughout this paper  step-graphons $W$ whose skeleton graphs are {\em connected}. Though there is no unique skeleton graph associated to a step-graphon (since there are infinitely many different partitions for $W$), we show in Proposition~\ref{prop:invarianceofskeleton} that if one such skeleton graph is connected, then so are all the others. For $S$ not connected, it is not too hard to see that the corresponding step-graphon is block-diagonal. Our results apply naturally to every connected component of~$S$.

We decompose the edge set of $S$ as $F=F_0 \cup F_1$, where elements of $F_0$ are self-loops, and elements of $F_1$ are edges between distinct nodes. We also introduce the subset $F_2 \subseteq F_1$ of edges that are {\em not} incident to two nodes with self-loops.

Let $\mathcal{I} := \{1,\ldots,|F|\}$ be an index set for~$F$ (so that the edges are now ordered). We decompose $\mathcal{I}$ similarly: let $\mathcal{I}_0$, $\mathcal{I}_1$, and~$\mathcal{I}_2$ index $F_0$, $F_1$, and $F_2$ respectively.

To introduce the edge-polytope of $S$, we recall that the incidence matrix $Z =[z_{ij}]$ of $S$ is an $|U| \times |F|$ matrix with its entries defined as follows: 
\begin{equation}\label{eq:defZS}
z_{ij} := \frac{1}{2}
\begin{cases}
    2, & \text{if } f_j\in F_0 \text{ is a loop on node } u_i,       \\
    1, & \text{if node } u_i \text{ is incident to } f_j\in F_1, \\
    0, & \text{otherwise}.
\end{cases}
\end{equation}
Owing to the factor $\frac{1}{2}$ in~\eqref{eq:defZS}, all columns of $Z$ are probability vectors, i.e., all entries are nonnegative and sum to one. The edge polytope of $S$ was introduced in~\cite{ohsugi1998normal} and the definition is reproduced below (with a slight difference in inclusion of the factor $\frac{1}{2}$ of the generators $z_j$):
\vspace{.1cm}

\begin{definition}[Edge polytope]\label{def:edgepolytope}
Let $S = (U,F)$ be a skeleton graph and $Z$ be the associated incidence matrix. Let $z_j$, for $1\leq j \leq |F|$, be the columns of $Z$.  
The {\bf edge polytope} of $S$, denoted by $\cX(S)$, is the convex hull of the vectors $z_j$: 
\begin{equation}\label{eq:defXS}
\cX(S):= \conv\{z_j \mid j = 1,\ldots, |F|\}.
\end{equation}
\end{definition}
%The edge polytope $\cX(S)$ is of codimension at least one in $\R^q$ (with $q= |U|$) since all the vectors $z_i$ belong to the standard simplex $\Delta^{q-1} \subset \R^q$. 
A point $x\in \cX(S)$ is said to be in the {\em relative interior} of $\cX(S)$, denoted by $\rint \cX(S)$, if there exists an open neighborhood $U$ of $x$ in $\R^q$ (with $q = |U|$) such that $U\cap \cX(S) \subseteq \cX(S)$. If $x$ is not an interior point, then it is called a {\em boundary} point and we write $x\in \partial \cX(S)$.  

\vspace{.2cm}

\noindent
{\bf Main result:} Let $W$ be a step-graphon. For a given partition $\sigma$ for $W$, let $x^*$ and $S$ be the associated concentration vector and the skeleton graph (which is assumed to be connected). We say that a cycle in $S$ is {\em odd} if it contains an odd number of distinct nodes (or edges); with this definition, self-loops are odd cycles.  Given these, we state the following two conditions:

\vspace{.2cm}

\noindent {\em Condition A:} The graph $S$ has an odd cycle.

\vspace{.2cm}

\noindent {\em Condition B:} The vector $x^*$ belongs to $\rint \cX(S)$.
\vspace{.2cm}

The two conditions are stated in terms of a partition $\sigma$ and its induced skeleton graph and edge-polytope. As mentioned earlier,  there exist infinitely many  partitions for a given step-graphon. However, the following proposition states that the two above conditions  are  {\em invariant} under changes of a partition.

\begin{proposition}\label{prop:invarianceofskeleton}
Let $W$ be a step-graphon. For any two partitions $\sigma$ and $\sigma'$ for $W$, let $x^*$, $x'^*$ be the corresponding concentration vectors and let $S$, $S'$ be the corresponding skeleton graphs. Then, the following hold:
\begin{enumerate}
    \item $S$ is connected if and only if $S'$ is;
    \item $S$ has an odd cycle if and only if $S'$ does;
    \item $x^*\in \cX(S)$ {\em (}resp. $x^* \in\rint \cX(S)${\em )} if and only if $x'^*\in \cX(S')$ {\em (}resp. $x'^*\in \cX(S')${\em )}.
\end{enumerate}
\end{proposition}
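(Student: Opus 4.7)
The plan is to reduce to the elementary operation of inserting one new breakpoint into a partition. Given two partitions $\sigma,\sigma'$ of $W$, their common refinement $\sigma''$ is again a partition of $W$, and one passes from $\sigma$ to $\sigma''$ by inserting one breakpoint at a time. Hence it suffices to treat the case $\sigma' = (\sigma_0,\ldots,\sigma_{i-1},\tau,\sigma_i,\ldots,\sigma_q)$ with $\tau\in(\sigma_{i-1},\sigma_i)$. In this case $S'$ is obtained from $S$ by splitting $u_i$ into two nodes $u_i^{(1)},u_i^{(2)}$: each edge $u_iu_k$ with $k\neq i$ becomes two edges $u_i^{(j)}u_k$ ($j=1,2$), and a self-loop on $u_i$ becomes self-loops on both copies together with a new edge $u_i^{(1)}u_i^{(2)}$. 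The concentration vector is affected only at the split coordinate: ${x'}^*_{(i,1)}$ and ${x'}^*_{(i,2)}$ are strictly positive and sum to $x^*_i$.

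For item 1, if $u_i$ has a self-loop the two copies are joined directly in $S'$, and otherwise they share every other neighbor of $u_i$; in either direction, connectedness is preserved. For item 2, any cycle of $S$ through $u_i$ lifts to one of the same length in $S'$ by replacing $u_i$ with $u_i^{(1)}$, since every neighbor of $u_i$ is also a neighbor of $u_i^{(1)}$. Conversely, the projection $u_i^{(j)}\mapsto u_i$ sends every edge of $S'$ to an edge of $S$; in particular the new edge $u_i^{(1)}u_i^{(2)}$ projects to a self-loop on $u_i$, which is present in $S$ exactly when this new edge is present in $S'$. Thus any odd cycle of $S'$ projects to a closed walk of odd length in $S$, which necessarily contains an odd cycle.

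For item 3, introduce the linear map $\pi:\R^{|U|+1}\to\R^{|U|}$ that sums the two split coordinates and leaves the others unchanged, so that $\pi({x'}^*)=x^*$. From the description of $S'$, $\pi$ sends each generator of $\cX(S')$ to a generator of $\cX(S)$, and every generator of $\cX(S)$ is the image of at least one generator of $\cX(S')$; hence the implication ${x'}^*\in\cX(S')\Rightarrow x^*\in\cX(S)$ is immediate. For the forward direction, write $x^*=\sum_l\lambda_l z_l$ and set $r={x'}^*_{(i,1)}/x^*_i\in(0,1)$. Lift each generator involving $u_i$ as follows: a self-loop at $u_i$ becomes $r$ times the self-loop at $u_i^{(1)}$ plus $(1-r)$ times the self-loop at $u_i^{(2)}$, and each edge $u_iu_k$ becomes $r$ times the edge $u_i^{(1)}u_k$ plus $(1-r)$ times the edge $u_i^{(2)}u_k$, keeping total weight $\lambda_l$ in each case. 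A direct coordinate check shows the resulting convex combination equals ${x'}^*$.

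The relative interior statement will be handled using the standard fact that $\rint\conv\{v_1,\ldots,v_m\}=\{\sum\mu_iv_i\mid\mu_i>0,\ \sum\mu_i=1\}$. The converse direction transfers easily, since each coefficient in the projected combination in $S$ is a sum of positive coefficients from $S'$. The forward direction is the main technical point: when $u_i$ carries a self-loop, the lifting above assigns zero weight to the new edge $u_i^{(1)}u_i^{(2)}$, so the combination is not yet strictly positive. I will remedy this by transferring a small amount $\varepsilon>0$ onto the missing generator, using the identity $\tfrac12\,z'_{\text{self at }u_i^{(1)}}+\tfrac12\,z'_{\text{self at }u_i^{(2)}}=z'_{u_i^{(1)}u_i^{(2)}}$, which is feasible because both self-loop coefficients are strictly positive by hypothesis. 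This bookkeeping step---ensuring every generator of $\cX(S')$ ends up with strictly positive weight---is the place where I expect the argument to require the most care.
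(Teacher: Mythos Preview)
Your proposal is correct and follows essentially the same route as the paper's proof: reduce to a one-step refinement via the common refinement, describe the node-splitting at the level of $S'$, handle items~1 and~2 by lifting/projecting cycles (with the odd closed walk argument), and for item~3 lift the convex combination using the ratio $r$ and then, in the self-loop case, push weight onto the new edge $u_i^{(1)}u_i^{(2)}$ via the identity $z'_{u_i^{(1)}u_i^{(2)}}=\tfrac12(z'_{\text{self at }u_i^{(1)}}+z'_{\text{self at }u_i^{(2)}})$. The paper carries out the same coefficient bookkeeping explicitly (cases (a)--(g)), whereas you phrase the backward direction through the linear projection~$\pi$; these are the same argument in different clothing.
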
 

We refer the reader to Appendix~\ref{app:prop1} for a proof of the proposition. 

We are now in a position to state the main result:

\begin{main}\label{thm:sufficient}
Let $W$ be a step-graphon. If it satisfies Conditions~A and~B for a given (and, hence, any) partition~$\sigma$, then it has the $H$-property. 
\end{main}

\begin{remark}\normalfont 
%We stated earlier that Conditions $A$ and $B$ are {\em essentially} necessary and sufficient for the $H$-property to hold. We explain here what we mean by essentially. 
In our earlier work~\cite{bcb2021h}, we have shown that if a step-graphon $W$ has the $H$-property, then it is necessary that Condition A and the following hold: 
\vspace{.1cm}

\noindent {\em Condition B':} The vector $x^*$ belongs to $\cX(S)$.
\vspace{.1cm}

\noindent
Note that condition~B' is weaker than Condition B:  Specifically, Condition~B leaves out the set of step-graphons for which $x^* \in \partial \cX(S)$, which is a set of measure zero. For step-graphons satisfying Conditions~A and~B', but not~B, it is possible that
 $$\lim_{n \to \infty}\mathbb{P}(\vec G_n \sim W \mbox{ has a Hamiltonian decomposition}) \in (0,1).$$
 We have produced explicit examples of such step-graphon  in~\cite{BCBASCC22,bcb2021h}.\hfill \qed
\end{remark}

\mab{Outline of proof:} 
Given a step-graphon $W$ with skeleton graph $S$, and $G_n \sim W$, the sampling procedure induces a natural graph homomorphism $\pi: G_n\to S$, whereby all nodes $v_j$ of $G_n$ whose coordinates $y_j$ belong to $[\sigma_{i-1},\sigma_i)$ are mapped to $u_i$. 
With a slight abuse of notation, we will use the same letter $\pi$ to denote the homomorphism $\pi:\vec G_n \to \vec S$.  

Let $n_i(G_n):= |\pi^{-1}(u_i)|$ be the number of nodes whose coordinates belong to $[\sigma_{i-1},\sigma_i)$. We call the following vector the {\em empirical concentration vector} of $G_n$:
    \begin{equation}\label{eq:defecv}x(G_n):= \frac{1}{n}(n_1(G_n),\ldots, n_q(G_n)).
\end{equation}
The proof of the Main Theorem contains three steps,  outlined below, among which step 2 contains the bulk of the proof. 

\xc{Step 1:}
The proof starts by showing how conditions $A$ and $B$ imply that the empirical concentration vector eventually belongs to the edge polytope. 
First, it should be clear that the edge polytope $\cX(S)$ is a subset of the standard simplex $\Delta^{q-1}$ in $\R^q$; thus, $\dim \cX(S) \leq (q-1)$. Condition A, owing to~\cite{ohsugi1998normal}, is both necessary and sufficient for the equality to hold. Next, note that $nx(G_n) = (n_1(G_n),\ldots, n_q(G_n))$ is a multinomial random variable with $n$ trials and $q$ outcomes with probabilities $x^*_i$, for $1 \leq i \leq q$. 
Then, Condition B guarantees, via  Chebyshev's inequality, that $x(G_n)$ belongs to $\rint \cX(S)$ \as~--- in this paper, \as~stands for almost surely as $n \to \infty$. 

The next two steps are then dedicated to establishing the following fact:
    \begin{equation}\label{eq:nicestatement}
    x(G_n) \in \rint \cX(S)   \\ \Rightarrow  G_n\mbox{ admits a  Hamiltonian decomposition \as.}
    \end{equation}

\xc{Step 2:}
    We start by working under the assumption  that $W$ is a {\em binary} step-graphon, i.e., $W(s,t) \in \{0,1\}$ for almost all $(s,t)\in [0,1]^2$.  In this case, we will see that a sampled graph $G_n \sim W$ is {\em completely determined} by its empirical concentration vector $x(G_n)$. 
    Consequently, our task (establishing~\eqref{eq:nicestatement}) is reduced to establishing the following: 
    \begin{multline}\label{eq:nicestatementbsg}
    x(G_n) \in \rint \cX(S) \mbox{ and $n$ is sufficiently large} \\ \Rightarrow G_n\mbox{ admits a  Hamiltonian decomposition surely.}
    \end{multline}  
       The proof of~\eqref{eq:nicestatementbsg} is constructive. 
       An important object that will arise therefrom  is what we call the $A$-matrix assigned to every Hamiltonian decomposition $H$ for $\vec G_n$, written as a map $\rho(H)=A$.    
   
    Specifically, the matrix $A$ is a $q$-by-$q$ matrix whose $ij$th entry tallies the number of edges of $H$  that go from a node in $\pi^{-1}(u_i)$ to a node in $\pi^{-1}(u_j)$ (A precise definition is in Subsection~\ref{ssec:edgepoly} and see Figure~\ref{fig:amatrix} for an illustration). Any such matrix is then shown to satisfy a number of enviable properties, among which we have $\rho(H)\bfo = x(G_n)$. 
 
    In a nutshell, we have just created the following sequence of maps: 
    $$
    H \xmapsto{} \rho(H)\xmapsto{}  \rho(H)\bfo = x(G_n),
    $$
    with the {\em domain} being all Hamiltonian decompositions in $\vec G_n$, for any $G_n$ sampled from a given binary graphon.

    Now, the effort in establishing~\eqref{eq:nicestatementbsg} is to create appropriate right-inverses (at least locally) of the maps in the above sequence, i.e., we aim to create maps $x \mapsto A(x)$ and $\tilde \rho: A(x) \mapsto H$ with the property that $\rho\cdot \tilde \rho$ is the  identity map and $A(x)\bfo =x$. The map $x\mapsto A(x)$ is created in Proposition~\ref{prop:propnijn}, Subsection~\ref{ssec:xtoA}, and the map $\tilde \rho$ is created in Proposition~\ref{prop:Aham},  Subsection~\ref{ssec:constructionHfromA}. 
    From these two subsections, it will be clear that  by introducing the $A$-matrix as an intermediate object, we can decouple the analytic part of the proof, contained in the creation of the map $x\mapsto A(x)$, from the graph-theoretic part, contained in the creation of $\tilde \rho$. This will conclude the proof of~\eqref{eq:nicestatementbsg}.

\xc{Step 3:} To close gap between {\em binary} step-graphons and general ones, we introduce here an equivalence relation on the class of step-graphons, namely, two step-graphons $W_1$ and $W_2$ are equivalent if their supports are the same. Or, said otherwise, $W_1$ and $W_2$ are equivalent if they share the same concentration vector and skeleton graph. 
Note, in particular, that each equivalence class $[W]$ contains  a unique representative which is a {\em binary} step-graphon, denoted by  $W^s$. We then establish~\eqref{eq:nicestatement} by showing that $W$ has the $H$-property if and only if $W^s$ does. In essence, we show that  the $H$-property is decided completely by the concentration vector and the skeleton graph of a step-graphon $W$.  

The proof of this statement builds upon several classical results from random graph theory, and is presented in Subsection~\ref{ssec:proofth2}.

\vspace{.2cm}

\noindent
{\bf Notation:} 
We gather here key notations and conventions.  

\vspace{.1cm}

\noindent {\em Graph theory:} 
Let $G=(V,E)$ be an undirected  graph. Graphs in this paper do {\em not} have multiple edges, but may have self-loops. We denote edges by $(v_i, v_j)\in E$; if $v_i = v_j$, then we call the edge a self-loop.   
For a given node $v_i$, let $N(v_i):=\{v_j \in V \mid (v_i,v_j)\in E\}$ be the {\em neighbor set} of $v_i$. The degree of $v_i$, denoted by $\deg (v_i)$, is the cardinality of $N(v_i)$.

We will also deal with directed graphs (digraphs) in this paper. 
Whether a graph is directed or undirected will be clear from the context and/or notation. 
We denote by $v_iv_j$ the directed edge from $v_i$ to $v_j$; we call $v_j$ an {\em out-neighbor} of $v_i$ and $v_i$ an {\em in-neighbor} of $v_j$. %if $v_iv_j$ is an edge in the digraph.
Similarly, we define $N_+(v_i)$ and $N_-(v_i)$ the sets of {\em in-neighbors} and {\em out-neighbors} of $v_i$, respectively.

Recall that for a given undirected graph $G = (V, E)$, possibly with self-loops, we let $\vec G=(V,\vec E)$ be the directed graph defined  as in~\eqref{eq:defvecE}. Self-loops in $\vec G$ are the same as the ones in $G$, i.e., they are not duplicated.

A {\em closed walk} in a graph (or digraph) is an ordered sequence of nodes $v_1v_2\cdots v_k v_1$ in $G$ (resp. $\vec G$) so that all consecutive nodes are ends of some edges (resp. directed edges). %Note that a closed walk can also be described by the sequence of edges that it traverses and, on occasions, {\color{red} we will define it via this edge sequence.} 
A {\em cycle} is a closed walk without repetition of nodes in the sequence except the starting- and the ending-nodes. For clarity of the presentation, we use letter $C$ to denote cycles in undirected graphs and the letter $D$ for cycles in directed graphs. 
%Two (potentially directed) walks $C_i$ and $C_j$  are {\em disjoint} if they have no nodes in common. A set of directed cycles $\{C_1,\ldots, C_k\}$ is said to {\em cover} a digraph if every node of the graph belongs to at least one of these cycles. These cycles form a {\em Hamiltonian decomposition}, denoted by $H$, if these cycles are pairwise disjoint and cover the digraph.   

\xc{Miscellaneous:} We use $\mathbf{1}$ to denote a column vector of all $1$, whose dimension will be clear within the context. We write $x\leq y$ for vectors $x,y\in \R^q$ if the inequality holds entrywise. For a given vector $x \in \R^q$, we denote its $\ell_1$ normalization by $\bar x$, i.e., $\bar x := \frac{x}{\| x \|_1}$,  with the convention that $\bar 0 = 0$. 
Further, given the vector $x$, we denote by $[x]$ the vector whose entry $[x]_i$ is a closest integer to $x_i$ for $1\leq i \leq q$ where for the case $x_i=k+\frac{1}{2}$, with $k$ an integer, we set $[x]_i:= k$. 
We denote the standard simplex in $\R^q$ by $\Delta^{q-1}:=\{x\in \R^q\mid x\ge 0 \mbox{ and } x^\top \bfo = 1 \}$. 
Finally, given a $q \times q$ matrix $A$, we denote by $\supp A$ its {\em support}, i.e., the set of indices corresponding to its non-zero entries.

\section{Analysis and Proof of the Main Theorem}
Throughout the proof, $W$ is a step-graphon, $\sigma$ its associated partition,  and $S = (U,F)$ its skeleton graph on $q$ nodes, which has an odd cycle. Let $F_0$ and $F_1$ be given as after Definition~\ref{def:skeleton}. We can naturally associate to them the subgraphs
\begin{equation}\label{eq:defS12}
S_0:= (U,F_0) \quad  \mbox{and} \quad S_1:=(U,F_1). 
\end{equation}

Note that $S$ has an odd cycle if and only if $S_0$ is edgewise non-empty or $S_1$ has an odd cycle. The lemma below states that we can consider,  without loss of generality,  only the latter case of $S_1$ containing an odd cycle.

\begin{lemma}\label{lem:SoddS1odd}
Let $W$ be a step-graphon. If $W$ admits a partition $\sigma$ with skeleton graph $S$ containing an odd cycle, then $W$ admits a partition $\sigma'$ with skeleton graph $S'$ so that the subgraph $S'_1$ has an odd cycle.  
\end{lemma}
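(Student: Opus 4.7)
The plan is a refinement argument. First, I would observe that if $S_1$ already contains an odd cycle, then $\sigma' = \sigma$ works trivially. So I would reduce to the case where $S_1$ is bipartite, in which case the odd cycle of $S$ guaranteed by hypothesis must use a self-loop, i.e.\ $F_0 \ne \emptyset$. Pick any node $u_i$ with a self-loop, meaning $W$ is non-zero on the diagonal block $[\sigma_{i-1},\sigma_i)^2$.

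The next step is to refine $\sigma$ by inserting a single new breakpoint $\tau \in (\sigma_{i-1},\sigma_i)$. In the refined skeleton graph $S'$, the node $u_i$ is replaced by two nodes $u_i'$ and $u_i''$ corresponding to $[\sigma_{i-1},\tau)$ and $[\tau,\sigma_i)$. Because $W$ is constant and non-zero on $[\sigma_{i-1},\sigma_i)^2$, it is in particular non-zero on the off-diagonal subrectangle $[\sigma_{i-1},\tau)\times[\tau,\sigma_i)$, so $(u_i',u_i'')$ appears in $F'_1$. Likewise, for any neighbor $u_j$ of $u_i$ in $S_1$, the blocks $[\sigma_{i-1},\tau)\times[\sigma_{j-1},\sigma_j)$ and $[\tau,\sigma_i)\times[\sigma_{j-1},\sigma_j)$ are both non-zero, so $(u_i',u_j)$ and $(u_i'',u_j)$ both lie in $F'_1$. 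These three edges form a triangle $u_i' u_j u_i'' u_i'$ in $S'_1$, yielding an odd cycle.

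The only place that needs care, and what I expect to be the main obstacle, is guaranteeing the auxiliary neighbor $u_j$. When $|U|\ge 2$, connectedness of $S$ immediately forces $u_i$ to have some neighbor in $S_1$, so the argument above goes through. The degenerate case is $|U|=1$, where $S$ is a single self-loop and no external $u_j$ is available. Here I would instead insert two breakpoints $\tau_1<\tau_2$ in $(0,1)$; since $W$ is non-zero on the entire unit square, every pair of the three resulting nodes is joined by an edge in $F'_1$, so $S'_1$ is itself a triangle. I would close the proof by noting that in both cases the refined $\sigma'$ is still a valid partition for $W$ (a refinement of $\sigma$ is always a partition), so $S'$ is a legitimate skeleton graph of $W$ with the desired property.
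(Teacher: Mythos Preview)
Your argument is correct and is precisely the paper's approach: the paper's proof sketch also reduces to the case where the odd cycle is a self-loop, applies a one-step refinement at that node, and reads off the resulting $3$-cycle in $S'_1$. Your treatment is in fact slightly more careful than the paper's terse sketch, since you explicitly handle the degenerate case $|U|=1$ (where a single split only produces two nodes and one must refine once more, or equivalently insert two breakpoints); the paper's one-line argument tacitly relies on the self-loop node having a neighbor in $S_1$.
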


The proof of the lemma can  be established by using the notion of ``one-step refinement'' introduced  in Appendix~\ref{app:prop1} for the partition $\sigma$: If $S_1$ already has an odd cycle, then there is nothing to prove. Otherwise, consider a one-step refinement on a node with self-loop in $S$, which will yield a cycle of length~$3$ in $S'$.

\subsection{On the edge polytope $\cX(S)$}\label{ssec:edgepoly}

\mab{Rank of $\cX(S)$:}
Recall that $\cX(S)$ is the edge-polytope of $S$. Similarly, we let $\cX(S_i)$, for $i = 1,2$, be the edge polytope (see Definition~\ref{def:edgepolytope}) of $S_i$, i.e., $\cX(S_i)$ is the convex hull of the $z_j$'s, with $j\in \cI_i$, where $\cI_i$ indexes edges in $F_i$.

% \begin{definition}
% Given a convex set ${\rm C}$, a set of vectors $\{v_1,\ldots, v_\ell\} \subset {\rm C}$ is called a set of {\em extremal generators} if it satisfies the following two conditions: 
% \begin{enumerate}
%     \item For any $v \in {\rm C}$, there exists $c_k\geq 0$, $1\leq k\leq \ell$, such that $v=\sum_{k=1}^\ell c_k v_k$;
%     \item For any $v_i$, the expression $v_i = \sum^\ell_{k = 1}c_{k}v_k$, with nonnegative $c_{k}$, holds if and only if $c_k = \delta_{ik}$ where $\delta_{ik}$ is the Kronecker delta.
% \end{enumerate}
% \end{definition}

We call $x$ an {\em extremal point of a polytope $\cX$} if there is no line segment in $\cX$ that contains $x$ in its interior. The maximal set of extremal points is called the set of {\em extremal generators for $\cX$}. 
The following result characterizes the extremal generators of $\cX(S_0)$, $\cX(S_1)$, and of $\cX(S)$: 

\begin{lemma}\label{lem:extremalgeenrator}
The set of extremal generators of $\cX(S_i)$, for $i = 1,2$, is $\{z_j \mid j\in \mathcal{I}_i\}$. 
The set of extremal generators of $\cX(S)$ is $\{z_i \mid i\in \mathcal{I}_0\cup \mathcal{I}_2\}$. . 

\end{lemma}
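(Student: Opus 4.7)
The plan is to exploit the very rigid shape of the generators: each $z_j$ is either a unit vector $e_i$ (when $j\in\cI_0$, a self-loop at $u_i$) or the midpoint $\tfrac12(e_i+e_k)$ of two distinct unit vectors (when $j\in\cI_1$, an edge $u_iu_k$ between distinct nodes). In particular, every $z_j$ has support of size one or two inside $\{1,\dots,q\}$, and since $S$ has no multi-edges, no two distinct generators share the same support. The single engine I will use repeatedly is the elementary observation that if a nonnegative vector $z$ is a convex combination $\sum_{j'}\lambda_{j'}z_{j'}$ of nonnegative vectors, then $\supp z_{j'}\subseteq\supp z$ for every $j'$ with $\lambda_{j'}>0$.

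First I handle the subgraphs. For $\cX(S_0)$, all generators are distinct unit vectors with pairwise disjoint supports, so each is trivially extremal. For $\cX(S_1)$ (and by the same reasoning for the edge polytope of any subgraph of $S_1$ such as $(U,F_2)$), every generator has support of size exactly two; the support-containment observation forces any contributor to a convex decomposition of $z_j=\tfrac12(e_i+e_k)$ to have support in $\{i,k\}$. Since there are no singleton-support generators in $\cX(S_1)$ and the only two-element-support generator with support equal to $\{i,k\}$ is $z_j$ itself, $z_j$ is extremal.

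For $\cX(S)$ itself, I split into three cases. (i) When $j\in\cI_0$, $z_j=e_i$ has singleton support; no edge generator has support contained in $\{i\}$, so only $e_i$ can contribute and $z_j$ is extremal. (ii) When $j\in\cI_1\setminus\cI_2$, the edge $u_iu_k$ has both endpoints carrying self-loops, hence $e_i$ and $e_k$ are generators of $\cX(S)$ and $z_j=\tfrac12 e_i+\tfrac12 e_k$ is a proper convex combination, disqualifying $z_j$ as extremal. (iii) When $j\in\cI_2$, at least one endpoint, say $u_i$, has no self-loop, so $e_i$ is not a generator of $\cX(S)$. The support-containment observation restricts contributors to those with support in $\{i,k\}$: the options are $z_j$ itself (excluded) and $e_k$ (only if $u_k$ carries a self-loop); the latter contributes $0$ to the $i$-th coordinate, whereas $(z_j)_i=\tfrac12$, a contradiction. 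Hence $z_j$ is extremal.

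The only potentially delicate step is case (iii), where one must carefully enumerate every generator with support inside $\{i,k\}$ and rule each out using the $i$-th coordinate. Once the support-containment observation is in place, everything else is routine bookkeeping, so I do not anticipate any real obstacle beyond a clean case analysis.
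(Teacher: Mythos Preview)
Your proof is correct and uses essentially the same support-containment idea as the paper's argument for $\cX(S_1)$ (pairwise distinct two-element supports force uniqueness). The paper's own proof is terser: it handles $\cX(S_0)$ and $\cX(S_1)$ in one line each and then defers the statement for $\cX(S)$ entirely to~\cite[Proposition~1]{bcb2021h}, whereas you supply a self-contained case analysis for $\cX(S)$ via the three cases $j\in\cI_0$, $j\in\cI_1\setminus\cI_2$, $j\in\cI_2$. Both arguments rely implicitly on the standard polytope fact that a generator $z_j$ of $\conv\{z_1,\dots,z_m\}$ is extremal iff $z_j\notin\conv\{z_{j'}:j'\neq j\}$; you might state this once at the outset for completeness, but the paper does not either.
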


\begin{proof}
The statement for $\cX(S_0)$ is obvious from the definition of the corresponding~$z_i$. For $\cX(S_1)$, it suffices to see that the vectors $z_i$, for $i \in \cI_1$,  have exactly  two non-negative entries, and the supports of these vectors are pairwise distinct. Hence, if $z_i = \sum_{j \in \cI_1} c_i z_j$ with $c_j \geq 0$, we necessarily have $c_j=0$ for $j \neq i$ and $c_i=1$. For $\cX(S)$, we refer the reader to~\cite[Proposition~1]{bcb2021h} for a proof.
\end{proof}

The rank of a polytope $\cX$ is the dimension of its relative interior.  
It is known~\cite{ohsugi1998normal} that if $S$ has $q$ nodes, then
\begin{equation}\label{eq:rankXSevenodd}
    \rank \cX(S) = 
    \begin{cases}
    q -1 & \mbox{if $S$ has an odd cycle}, \\
    q - 2 & \mbox{otherwise.}
    \end{cases}
\end{equation}
Equivalently, we have the following result~\cite{van1976incidence} on the rank of the incidence matrix~$Z_S$ of $S$:
\begin{equation}\label{eq:rankZSevenodd}
    \rank Z_S = 
    \begin{cases}
    q  & \mbox{if $S$ has an odd cycle}, \\
    q - 1 & \mbox{otherwise.}
    \end{cases}
\end{equation}

\mab{The $A$-matrix:}
Let $G_n \sim W$ and suppose that $\vec G_n$ has a Hamiltonian decomposition, denoted by $H$.

Recall that $\pi: \vec G_n\to \vec S$ is the graph homomorphism introduced above~\eqref{eq:defecv}. 
Let $n_{ij}(H)$ be the number of (directed) edges of $H$ from a node in $\pi^{-1}(u_i)$ to a node in $\pi^{-1}(u_j)$.   
It is not hard to see that  (see~\cite[Lemma~1]{bcb2021h} for a proof)  
for all $u_i \in U$, 
\begin{equation}\label{eq:solH}
n_i(G_n)=\sum_{u_j \in N(u_i)} n_{ij}(H) = \sum_{u_j\in N(u_i)} n_{ji}(H).
\end{equation}

We  now assign to the skeleton graph $S$ a convex set that will be instrumental in establishing the main result:

\begin{definition}[$A$-matrices and their set]\label{def:CaS}
Let $S = (U,F)$ be an undirected graph on $q$ nodes. We define  $\cA(S)$ as the set of $q\times q$ nonnegative matrices $A = [a_{ij}]$ satisfying the following two conditions: 
\begin{enumerate} 
\item If $(u_i,u_j) \notin F$, then $a_{ij}=0$;
\item $A\bfo = A^\top \bfo$, and $\bfo^\top A \bfo = 1$. 
  \end{enumerate} 
\end{definition}

Because every defining condition for ${\cal A}(S)$ is affine, the set $\cA(S)$ is a convex set.   

Now, to each Hamiltonian decomposition $H$ of $\vec G_n$,  we assign the following $q\times q$ matrix:  
\begin{equation}\label{eq:IDSM}
\rho(H):= \frac{1}{n} \left[ n_{ij}(H)\right]_{1\leq i,j\leq q}.
\end{equation}
It follows from~\eqref{eq:solH} that $\rho(H) \in \cA(S)$ and $\rho(H)\bfo = x(G_n)$. Furthermore, we have established in~\cite[Proposition~4]{bcb2021h} the following connection between the set $\cA(S)$ and the edge polytope $\cX(S)$: 
\begin{equation}\label{eq:ds}
\cX(S) = \{ x\in \R^q \mid x=A\bfo \mbox{ for some } A\in \cA(S) \}.
\end{equation}
We refer the reader to Figure~\ref{fig:amatrix} for illustration. 

\begin{figure}
    \centering
        \subfloat[\label{sfig3:W}]{
\includegraphics{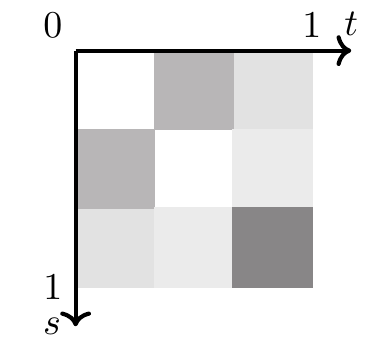}}
\qquad
\subfloat[\label{sfig3:S}]{
   \includegraphics{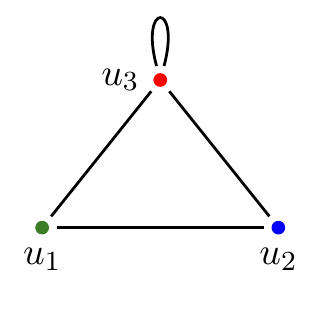}}

\subfloat[\label{sfig3:H}]{
   \includegraphics{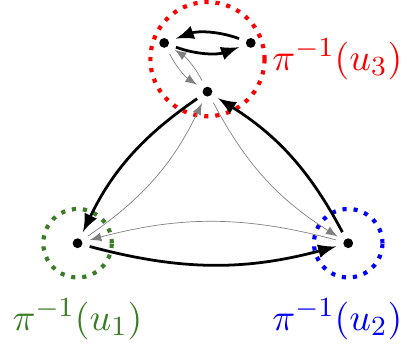}}\quad
\subfloat[\label{sfig2:rho}]{
 \includegraphics{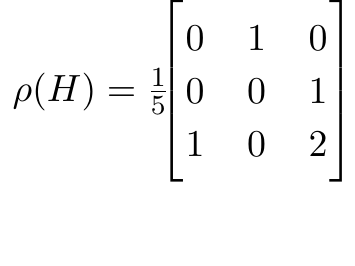}}

\caption{{(a):} A step-graphon $W$. (b): Its skeleton graph $S$. {(c):} A digraph $\vec G_n$, with $G_n\sim W$, for $n = 5$. The edges in black form the Hamiltonian decomposition $H$. (d): The matrix $\rho(H)$ defined in~\eqref{eq:IDSM}.}
    \label{fig:amatrix}
\end{figure}

\subsection{Local coordinate systems on $\cX(S)$ and $\cX(S_1)$}
This section establishes the groundwork for the construction of the map $x\mapsto A(x)$ described in the proof outline. %which sends an empirical concentration vector $x$ to a matrix $ A(x) \in \cA(S)$ with $A(x)\bfo=x$. 
To this end, we first show how to express any point in a neighborhood $\cU$ of $x^* \in \cX(S)$ as a {\em positive combination} of the columns of the incidence matrix $Z_S$. This amounts to  solving the linear equations $Z_S \phi(x) = x$, for $x\in \cU$, with $\phi(x)$ being continuous  in $x$ and positive.  We will solve a similar problem for $y^* \in \cX(S_1)$ and with $Z_{S}$ replaced by $Z_{S_1}$, and we call the corresponding solution~$\theta(y)$. These two maps will be put to use in the next subsection.
%$\dot x =f (x) +u$

\mab{Construction of the map $\phi$:} We start with the following lemma: 
\begin{lemma}\label{lem:existphig0}
Suppose that $S = (U,F)$ has an odd cycle. Then, for any $x^*\in \rint \cX(S)$, there exist a closed neighborhood $\mathcal{U}$ of $x^*$ in the simplex and a continuous map $\phi: \mathcal{U} \to \rint \Delta^{|F|-1}$ such that $Z_S \phi(x) = x$ for any $x\in \mathcal{U}$.  
\end{lemma}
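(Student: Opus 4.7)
The strategy is to find a single ``base'' positive preimage $\phi^*$ of $x^*$ under $Z_S$, and then extend to a neighborhood by adding a linear correction that uses a right inverse of $Z_S$. Continuity and openness of the positive orthant then give a neighborhood on which the correction stays positive.

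First, I would produce $\phi^* \in \rint \Delta^{|F|-1}$ with $Z_S \phi^* = x^*$. Since $x^* \in \rint \cX(S)$ and $\cX(S) = \conv\{z_j : j \in \mathcal{I}\}$, a standard fact about relative interiors of polytopes gives a representation with all strictly positive weights. Concretely, let $y := \tfrac{1}{|F|}\sum_{j} z_j$; since $x^* \in \rint \cX(S)$, there exists $t>0$ such that $x^* + t(x^* - y) \in \cX(S)$, i.e., $(1+t)x^* - t y = \sum_j \mu_j z_j$ for some $\mu \in \Delta^{|F|-1}$, whence
\[
x^* = \frac{t}{1+t}\cdot \frac{1}{|F|}\sum_{j} z_j \;+\; \frac{1}{1+t}\sum_{j}\mu_j z_j.
\]
Setting $\phi^*_j$ equal to the resulting coefficient of $z_j$ gives $\phi^* > 0$ with $\sum_j \phi^*_j = 1$ and $Z_S \phi^* = x^*$.

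Next, I invoke the odd-cycle hypothesis via~\eqref{eq:rankZSevenodd}: $Z_S$ has full row rank~$q$, hence admits a right inverse $Z_S^{+} \in \R^{|F|\times q}$ with $Z_S Z_S^{+} = I_q$. Define
\[
\phi(x) := \phi^* + Z_S^{+}(x - x^*).
\]
This map is affine, hence continuous, and it satisfies $Z_S \phi(x) = x$ by construction. Because every column of $Z_S$ is a probability vector, $\mathbf{1}^\top Z_S = \mathbf{1}^\top$, so whenever $x$ lies in the simplex we obtain $\mathbf{1}^\top \phi(x) = \mathbf{1}^\top x = 1$. Thus $\phi(x)$ automatically lies on the affine hyperplane $\{\phi : \mathbf{1}^\top \phi = 1\}$.

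It remains to enforce positivity. Since $\phi^* > 0$ coordinatewise and the map $x \mapsto \phi(x)$ is continuous, there is an open ball $B$ around $x^*$ in $\R^q$ on which $\phi(x) > 0$ coordinatewise. Take $\mathcal{U}$ to be a closed neighborhood of $x^*$ in the simplex contained in $B$. Then for every $x \in \mathcal{U}$ we have $\phi(x) \in \rint \Delta^{|F|-1}$ and $Z_S \phi(x) = x$, completing the proof. The only genuinely delicate point in this argument is the production of a strictly positive preimage $\phi^*$, but this follows from the standard relative-interior argument recalled above; the remainder is linear algebra plus continuity.
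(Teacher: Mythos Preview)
Your proof is correct and follows essentially the same approach as the paper: both start from a strictly positive preimage $\phi^*$ of $x^*$, use the full row rank of $Z_S$ (from the odd-cycle hypothesis) to build an affine local section, and then invoke continuity to keep the coefficients positive on a closed neighborhood. The only cosmetic differences are that the paper constructs the section by explicitly selecting $q$ linearly independent columns of $Z_S$ (rather than an abstract right inverse $Z_S^+$), and that you supply an explicit relative-interior argument for the existence of $\phi^*$ where the paper simply asserts it.
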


\begin{proof}
Because $\cX(S)$ is finitely generated by the columns of $Z_S$, i.e.~the $z_i$'s for $i\in \cI$,  and because $x^*\in \rint \cX(S)$, there exists a {\em positive} probability vector $c := (c_1,\ldots,c_{|F|})$ such that $x^* = Z_Sc$. 
Let $\epsilon:= \frac{1}{2}\min_{i\in \cI} \{c_i\}$. 
Because $S$ has at least one odd cycle, we know from~\eqref{eq:rankZSevenodd} that $Z_S$ is full rank. Thus, we can pick $q$ columns, say $z_1,\ldots, z_q$  of $Z_S$,  that  form a basis of $\R^q$. 
Let $B \subset \R^{|F|}$ be a closed ball centered at $0$ with radius $\epsilon$, and  
let $$B_0:=\{y\in B \mid \bfo^\top y = 0 \mbox{ and } y_i = 0, \mbox{ for all } i > q \}.$$ 
The dimension of $B_0$ is $(q - 1)$.   
We now define the  map $$\psi: B_0 \to \R^q: y \mapsto x^* + Z_Sy = Z_S (y + c).$$ 
It should be clear that $\psi$ is a linear bijection between $B_0$ and its image $\psi(B_0)$. By the construction of $B$ and $B_0$, all the entries of $(y + c)$, for $y\in B_0$, are positive and, moreover,  $\bfo^\top (y + c) = 1$. 
It then follows that  the image of $\psi$ is a closed neighborhood of $x^*$ inside $\cX(S)$. We  now set $\phi := \psi^{-1}$. It remains to show that $\phi(x)\in \Delta^{|F|-1}$. This holds because every column of $Z_S$ belongs to $\Delta^{q-1}$ and so does $x$. Thus, from $Z_S\phi(x) = x$, we have that $x$ is a convex combination of the columns of $Z_S$, which implies that $\phi(x)\in \Delta^{|F|-1}$.
\end{proof}

%In the sequel, the set $\cU$ will refer to the set described in the statement of Lemma~\ref{lem:existphig0}.

Let $x^* \in \rint \cX(S)$ and $\phi$ be the map given in Lemma~\ref{lem:existphig0}. 
For an edge  $f_i$ of $S$, we let $\phi_i$ be the corresponding entry of $\phi$. 
We next define two functions $\tau_i: \mathcal{U} \to \R^q$, for $i = 1,2$, as follows: 
\begin{equation}\label{eq:defx0x1}
\tau_i(x): x\mapsto \sum_{j \in \cI_i} \phi_{j}(x) z_j.
\end{equation}
%where we recall that $\cI_0$ indexes the self-loops of $S$, and $\cI_1$ the edges between distinct nodes.
If $S$ has no self-loops, then $\tau_0$ is set to be the zero map. We can thus decompose $x\in \mathcal{U}$ as $$x = \tau_0(x) + \tau_1(x).$$  We record the following simple observation for later use: 

\begin{lemma}\label{lem:supportx0x1}
Let $\cU$ be as in Lemma~\ref{lem:existphig0}. For every $x\in \cU$, the set of indices of nonzero (positive) entries of $\tau_0(x)$ is  $\{i\mid u_i \mbox{ has a self-loop}\}$ and, moreover,  every entry of $\tau_1(x)$ is positive.
\end{lemma}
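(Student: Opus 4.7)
Both assertions reduce to a direct inspection of the columns $z_j$, combined with the positivity of $\phi(x)$ on the open simplex. The main idea is that the self-loops and the non-loop edges give structurally very different extremal generators of $\cX(S)$, and the decomposition $x = \tau_0(x) + \tau_1(x)$ separates these two contributions cleanly.

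For the first assertion, I would observe that for $j \in \cI_0$ the edge $f_j$ is a self-loop on a unique node $u_{k(j)}$, so by the definition~\eqref{eq:defZS} the column $z_j$ is exactly the standard basis vector $e_{k(j)}$. Consequently,
\[
\tau_0(x) \;=\; \sum_{j \in \cI_0} \phi_j(x)\, e_{k(j)},
\]
and since the graph $S$ has no multiple edges there is at most one self-loop per node. Hence the $i$th entry of $\tau_0(x)$ equals $\phi_j(x)$ when $f_j$ is the self-loop at $u_i$, and is zero if no such loop exists. Because $\phi(x) \in \rint \Delta^{|F|-1}$ by Lemma~\ref{lem:existphig0}, every $\phi_j(x)$ is strictly positive, and the first claim follows.

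For the second assertion, I would note that for $j \in \cI_1$ the edge $f_j$ joins two distinct nodes $u_i, u_k$ and, by~\eqref{eq:defZS}, $z_j = \tfrac{1}{2}(e_i + e_k)$. Therefore
\[
(\tau_1(x))_i \;=\; \tfrac{1}{2} \sum_{\substack{j \in \cI_1 \\ u_i \text{ incident to } f_j}} \phi_j(x).
\]
Again each $\phi_j(x) > 0$, so $(\tau_1(x))_i > 0$ if and only if $u_i$ is incident to at least one edge in $F_1$.

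The only point requiring a small argument is this last equivalence: every node of $S$ must be incident to some non-loop edge. By the standing assumption $S$ is connected, and after the reduction of Lemma~\ref{lem:SoddS1odd} the subgraph $S_1$ contains an odd cycle, so in particular $q \geq 3$. If some $u_i$ had only self-loops (or no edges at all), then $u_i$ would be disconnected from the other $q-1 \geq 2$ nodes, contradicting connectivity of $S$. This is the only nontrivial step, but it follows immediately from the hypotheses; no serious obstacle is expected.
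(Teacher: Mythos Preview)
Your proposal is correct and follows essentially the same route as the paper's proof: the paper dispatches $\tau_0$ as ``trivial'' and for $\tau_1$ simply notes that $\phi(x)$ has positive entries and that no row of $Z_{S_1}$ is identically zero. Your argument spells out these two observations in the same way, and your extra care in justifying that every node of $S$ is incident to some edge of $F_1$ (via connectivity and $q\ge 3$) is a point the paper leaves implicit.
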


\begin{proof}
The statement for $\tau_0(x)$ is trivial. The statement for $\tau_1(x)$ follows from the fact that $\phi(x)$ has positive entries and no row of $Z_{S_1}$ is identically $0$. %has nonnegative entries and none of its  rows are identically zero
\end{proof}

\mab{Construction of the map $\theta$:}
For any $x\in \cU$, let $\bar \tau_i(x)$, for $i = 0,1$, be defined as  follows: 
$$
\bar \tau_i(x):= 
\begin{cases}
\tau_i(x) /\|\tau_i(x)\|_1 & \mbox{if } \tau_i(x) \neq 0,\\
0 & \mbox{otherwise}.
\end{cases}
$$
Since $S$ has odd cycle, recall that we can assume by Lemma~\ref{lem:SoddS1odd} that $S_1$ has an odd cycle. Thus, by~\eqref{eq:rankXSevenodd}, the rank of $\cX(S_1)$ is $(q-1)$. In particular, it implies that the relative interior of $\cX(S_1)$ is open in $\Delta^{q-1}$. 
Further, by Lemma~\ref{lem:supportx0x1}, if $x\in \cU$, then $ \bar \tau_1(x) \in \rint \cX(S_1)$. %(It also holds that if $S$ has a self-loop, then $\bar \tau_0(x)\in \rint \cX(S_0)$; but this fact is not needed below.)

The map $\theta$ we introduce below is akin to the map $\phi$ introduced in Lemma~\ref{lem:existphig0}, but defined on a closed neighborhood of $$\bar x_1^*:=\bar \tau_1(x^*) \in \rint \cX(S_1).$$  

\begin{lemma}\label{lem:existtheta}
Suppose that $S$ (and, hence, $S_1$) has an  odd cycle; then, for the given $\bar x^*_1\in \rint \cX(S_1)$, there exist a closed neighborhood $\cV$ of $\bar x_1^*$ in $\Delta^{q-1}$ and a continuous map $\theta: \cV \to \rint \Delta^{|F_1|-1}$ such that $Z_{S_1} \theta(y) = y$ for any $y\in \cV$.
\end{lemma}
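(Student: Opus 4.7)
The plan is to establish Lemma~\ref{lem:existtheta} by mirroring the argument used to prove Lemma~\ref{lem:existphig0}, simply replacing $S$ with $S_1$ and $x^*$ with $\bar x_1^*$. The two rank facts that made the earlier proof work still hold in this setting: by Lemma~\ref{lem:SoddS1odd} we may assume $S_1$ has an odd cycle, so by~\eqref{eq:rankZSevenodd} the matrix $Z_{S_1}$ has full row rank $q$, and by~\eqref{eq:rankXSevenodd} we have $\rank \cX(S_1) = q-1$, which means $\rint \cX(S_1)$ is open as a subset of $\Delta^{q-1}$.

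First I would use the hypothesis $\bar x_1^* \in \rint \cX(S_1)$ to obtain a strictly positive probability vector $d = (d_1,\ldots,d_{|F_1|}) \in \rint \Delta^{|F_1|-1}$ with $Z_{S_1} d = \bar x_1^*$, and set $\epsilon := \tfrac{1}{2}\min_j d_j > 0$. Next, using $\rank Z_{S_1} = q$, I would select $q$ columns of $Z_{S_1}$ forming a basis of $\R^q$; after reindexing we may assume these are the columns indexed by $\{1,\ldots,q\} \subseteq \cI_1$. Define
\begin{equation*}
B_0 := \{\, y \in \R^{|F_1|} \mid \|y\| \leq \epsilon,\ \mathbf{1}^\top y = 0,\ y_j = 0 \text{ for } j > q \,\},
\end{equation*}
a closed ball of dimension $(q-1)$ centered at $0$ in an affine subspace.

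Consider the linear map $\psi: B_0 \to \R^q$, $y \mapsto \bar x_1^* + Z_{S_1} y = Z_{S_1}(y + d)$. Because the first $q$ columns of $Z_{S_1}$ are linearly independent, $\psi$ is a linear bijection onto its image. For $y \in B_0$, the vector $y + d$ has all positive entries (since $\|y\|_\infty \leq \epsilon \leq \tfrac{1}{2}\min_j d_j$) and satisfies $\mathbf{1}^\top(y+d) = 1$, so $y + d \in \rint \Delta^{|F_1|-1}$. Consequently $\psi(y) = Z_{S_1}(y+d)$ is a convex combination of columns of $Z_{S_1}$ (each of which lies in $\Delta^{q-1}$), hence $\psi(y) \in \cX(S_1) \subseteq \Delta^{q-1}$. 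The image $\psi(B_0)$ therefore is a closed neighborhood of $\bar x_1^*$ inside $\cX(S_1)$; and since $\rint \cX(S_1)$ is open in $\Delta^{q-1}$, shrinking $\epsilon$ if necessary guarantees that $\psi(B_0)$ is also a closed neighborhood of $\bar x_1^*$ in $\Delta^{q-1}$.

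Setting $\cV := \psi(B_0)$ and $\theta := \psi^{-1} + d$ (more precisely, $\theta(y) := \psi^{-1}(y) + d$, viewed as a map into $\R^{|F_1|}$) then yields a continuous map satisfying $Z_{S_1}\theta(y) = y$ for all $y \in \cV$, with $\theta(y) \in \rint \Delta^{|F_1|-1}$ by the positivity and normalization already established. I do not expect a serious obstacle: the argument is essentially a transcription of the proof of Lemma~\ref{lem:existphig0}. The only minor subtlety worth double-checking is that the produced neighborhood is a neighborhood inside $\Delta^{q-1}$ (not just inside the affine hull of $\cX(S_1)$), which is exactly what the openness of $\rint \cX(S_1)$ in $\Delta^{q-1}$ provides.
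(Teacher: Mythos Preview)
Your proposal is correct and follows exactly the approach the paper intends: the paper's proof of Lemma~\ref{lem:existtheta} is omitted with the remark that it is ``entirely similar to the one of Lemma~\ref{lem:existphig0},'' and you have carried out precisely that transcription, replacing $S$, $Z_S$, $x^*$, $c$ by $S_1$, $Z_{S_1}$, $\bar x_1^*$, $d$. Your explicit identification $\theta(y)=\psi^{-1}(y)+d$ and your check that $\psi(B_0)$ is a neighborhood in $\Delta^{q-1}$ (via openness of $\rint\cX(S_1)$ in $\Delta^{q-1}$) are both fine and arguably clarify minor points left implicit in the paper's proof of Lemma~\ref{lem:existphig0}.
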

The proof is entirely similar to the one of Lemma~\ref{lem:existphig0}, and is thus omitted. 

Because  $\phi$ and $\theta$ are both positive, continuous maps over closed, bounded domains, there exists an  $\alpha \in (0,1)$ so that 
\begin{equation}\label{eq:defalphalowerbound}
\begin{aligned}
    \phi(x) \ge \alpha\bfo   \mbox{ for all } x \in \cU, \\
    \theta(x)\ge \alpha\bfo  \mbox{ for all }  x \in \cV.
\end{aligned}
\end{equation}

\mab{On the image of $\bar \tau_1$:} 
For a given $x^*\in \rint \cX(S)$, the domains of $\phi$ and $\theta$ are closed neighborhoods $\cU$ and $\cV$ of $x^*$ and $\bar x_1^*$, respectively. Later in the analysis, we will pick an arbitrary $x\in \cU$ and apply $\theta$ to $\bar\tau_1(x)$. 
For this, we need that $\bar \tau_1(x)$ belongs to $\cV$. To this end, we will shrink $\cU$ so that $\bar \tau_1(\cU) \subseteq \cV$ and thus the composition $\theta \bar \tau_1$ is well defined. In fact, we have the stronger statement:

\begin{lemma}\label{lem:xbx1epsilon}
Let $\alpha > 0$ be given as in~\eqref{eq:defalphalowerbound}. 
There exist a closed neighborhood  $\cU'\subseteq \cU$ of $x^*$  and a positive $\epsilon < \frac{1}{4}\alpha$, such that 
$$\overline{\tau_1(x)  +\eta} = \frac{\tau_1(x)  +\eta}{\|\tau_1(x)  +\eta\|_1}\in \cV,$$
for any $x\in \cU'$ and for any $\eta \in \R^q$ with $\|\eta\|_\infty \le \epsilon$.   
\end{lemma}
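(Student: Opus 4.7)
The plan is to reduce the lemma to a continuity argument, the main technical point being to ensure that $\tau_1(x) + \eta$ remains strictly positive entrywise (so its $\ell_1$-normalization lies in $\Delta^{q-1}$) and close to $\bar x_1^*$.

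First, I would observe that by Lemma~\ref{lem:supportx0x1} the vector $\tau_1(x^*)$ has all entries \emph{strictly positive}. Let $\delta := \min_i (\tau_1(x^*))_i > 0$. Since $\phi$ is continuous on $\cU$ by Lemma~\ref{lem:existphig0}, the map $\tau_1(x) = \sum_{j \in \cI_1} \phi_j(x) z_j$ is continuous on $\cU$, so I can choose a closed neighborhood $\cU_1 \subseteq \cU$ of $x^*$ on which $(\tau_1(x))_i \ge \delta/2$ for every $i$.

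Next, I would fix any $\epsilon_0 > 0$ with $\epsilon_0 < \min\{\alpha/4,\ \delta/4\}$. For $x \in \cU_1$ and $\eta \in \R^q$ with $\|\eta\|_\infty \le \epsilon_0$, each entry of $\tau_1(x) + \eta$ is at least $\delta/2 - \delta/4 = \delta/4 > 0$; in particular $\tau_1(x) + \eta$ is entrywise positive, so $\|\tau_1(x)+\eta\|_1 = \bfo^\top(\tau_1(x)+\eta) \ge q\delta/4$ and the normalization $\overline{\tau_1(x)+\eta} \in \Delta^{q-1}$ is well-defined. Because the denominator is bounded below by $q\delta/4$ on this set, the map
\[
\Phi:\cU_1 \times \{\eta : \|\eta\|_\infty \le \epsilon_0\} \to \Delta^{q-1},\quad (x,\eta) \mapsto \frac{\tau_1(x)+\eta}{\|\tau_1(x)+\eta\|_1}
\]
is continuous. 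At $(x^*, 0)$ it takes the value $\bar\tau_1(x^*) = \bar x_1^*$.

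Now I would use continuity of $\Phi$ together with the fact that $\bar x_1^*$ lies in the relative interior of $\cV$ (since $\cV$ is a closed neighborhood of $\bar x_1^*$ in $\Delta^{q-1}$ by Lemma~\ref{lem:existtheta}): pick a relatively open ball $\mathcal{B} \subseteq \cV$ around $\bar x_1^*$. The preimage $\Phi^{-1}(\mathcal{B})$ is an open neighborhood of $(x^*, 0)$ in $\cU_1 \times \R^q$, so it contains a product set of the form $\cU' \times \{\|\eta\|_\infty \le \epsilon\}$ with $\cU' \subseteq \cU_1$ a closed neighborhood of $x^*$ and $0 < \epsilon \le \epsilon_0$. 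This $\cU'$ and $\epsilon$ (which automatically satisfy $\epsilon < \alpha/4$) are the desired objects.

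The proof is essentially a continuity-plus-shrinking argument, with no real obstacle; the only thing one must be careful about is not to take the $\ell_1$ normalization blindly — one must first guarantee that $\tau_1(x)+\eta$ is genuinely a positive vector, which is why the lower bound $\delta$ on entries of $\tau_1(x^*)$, obtained via Lemma~\ref{lem:supportx0x1}, is the key input.
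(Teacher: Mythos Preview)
Your proof is correct. The approach differs from the paper's in that the paper carries out an explicit quantitative estimate: it fixes a closed ball $\cV'\subset\rint\cV$, finds $\epsilon'>0$ with the $\epsilon'$-neighborhood of $\cV'$ inside $\cV$, sets $\cU':=\bar\tau_1^{-1}(\cV')$, and then bounds $\bigl\|\,\overline{\tau_1(x)+\eta}-\bar\tau_1(x)\,\bigr\|_\infty$ by hand, arriving at the explicit condition $(8+4\alpha)\epsilon/(q\alpha^2)<\epsilon'$. You instead package everything into continuity of the single map $\Phi(x,\eta)=\overline{\tau_1(x)+\eta}$ and pull back an open ball in $\cV$. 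Your route is shorter and conceptually cleaner; the paper's route yields an explicit relation between $\epsilon$, $\alpha$, and $q$, though only the constraint $\epsilon<\alpha/4$ is actually invoked downstream, which you secure as well. One minor simplification you could make: rather than introducing $\delta=\min_i(\tau_1(x^*))_i$ and shrinking to $\cU_1$, you already know from~\eqref{eq:defalphalowerbound} and~\eqref{eq:defx0x1} that every entry of $\tau_1(x)$ is at least $\alpha/2$ uniformly on all of $\cU$, so the preliminary shrinking step is unnecessary.
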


\begin{proof}
Let $\cV'$ be a closed ball centered at $\bar x_1^*$ and  contained in the interior of $\cV$. Then, it is known~\cite[Theorem 4.6]{munkres2018analysis} that there exists an $\epsilon'> 0$ such that the $\epsilon'$-neighborhood of $\cV'$, with respect to the infinity norm, is  contained in the interior of~$\cV$.
Let $\cU':= \bar\tau_1^{-1}(\cV')$ and $\epsilon$ be sufficiently small so that 
\begin{equation}\label{eq:condepsilon}
\frac{(8+4\alpha)\epsilon}{q\alpha^2} < \min\left \{\epsilon', \frac{1}{4}\alpha \right\}.  
\end{equation}
We claim that the above-defined $\cU'$ and $\epsilon$ are as desired.

Since $\bar \tau_1$ is continuous and since $\cV'$ is a closed ball centered at $\bar x^*_1$, $\cU'$ is a closed neighborhood of $x^*$. Now, pick an arbitrary $x\in \cU'$. 
For ease of notation, we set $x_1 := \tau_1(x)$ and $\bar x_1 = \bar \tau_1(x)$ for the remainder of this proof. 
Then, %, for $x_1:=\tau(x_1)$, we have that 
\begin{align}\label{eq:barxdiff}
\left\| \overline {x_1 + \eta} - \bar x_1 \right \|_\infty  &= \left \| \frac{x_1+\eta}{\|x_1+\eta\|_1} - \frac{x_1}{\|x_1\|_1} \right \|_\infty \notag\\
& = \left \| \frac{(\|x_1\|_1 - \|x_1 + \eta\|_1 )x_1 + \|x_1\|_1 \eta}{\|x_1 + \eta\|_1 \|x_1\|_1}  \right \|_\infty  \notag\\
&\leq  \frac{|\|x_1\|_1 - \|x_1 + \eta\|_1 |}{\|x_1 +\eta\|_1 \|x_1\|_1} \|x_1\|_\infty +  \frac{\|\eta\|_\infty}{\|x_1 + \eta\|_1}  \notag\\
&\leq  \frac{|\|x_1\|_1 - \|x_1 + \eta\|_1|}{\|x_1 +\eta\|_1 \|x_1\|_1} + \frac{\|\eta\|_\infty}{\|x_1 + \eta\|_1}
\end{align}
where we used the fact that $\|x_1\|_{\infty} \leq 1$ to obtain the last inequality.
To further evaluate~\eqref{eq:barxdiff}, we first note that
$$
|\|x\|_1 - \|x_1 + \eta\|_1| \le \|\eta\|_1 \le q\|\eta\|_\infty \leq q\epsilon. 
$$
Next, by~\eqref{eq:defZS},~\eqref{eq:defx0x1} and~\eqref{eq:defalphalowerbound}, 
every entry of $x_1$ is greater than $\frac{1}{2}\alpha$, so $\|x_1\|_1 \ge \frac{1}{2} q \alpha$. 
Moreover, since $\epsilon < \frac{1}{4}\alpha$, 
$$
\|x_1 + \eta\|_1 \ge \|x_1\|_1 - \|\eta\|_1 \ge \frac{1}{2} q\alpha - q\epsilon \ge \frac{1}{4} q \alpha.  
$$
Finally, using~\eqref{eq:condepsilon}, we can proceed from~\eqref{eq:barxdiff} and obtain that 
$$
\left\| \overline {x_1 + \eta} - \bar x_1 \right \|_\infty \le \frac{8 \epsilon }{q \alpha^2} + \frac{4\epsilon}{ q\alpha} = \frac{(8 + 4\alpha)\epsilon}{q\alpha^2}< \epsilon',
$$
which implies that $\overline{x_1 + \eta}$ belongs to the $\epsilon'$-neighborhood of $\cV'$ and, hence, to $\cV$. 
\end{proof}

\begin{remark}\label{rem:cucup}
\normalfont From now on, to simplify the notation, we denote by $\cU$ the set $\cU'$ of Lemma~\ref{lem:xbx1epsilon}. \hfill \qed
\end{remark}

\subsection{Construction of the map $x\mapsto A(x)$}\label{ssec:xtoA}
To construct the map, we first specify its domain, which will be a subset of $\cU$. If $x\in \R^q$ is an empirical concentration vector of some $G_n\sim W$ for a step-graphon $W$, then $nx$ necessarily has integer entries. 
Define a subset of $\cU$ as follows: 
\begin{equation}\label{eq:domainofomega1}
    \cU^*:= \left \{x\in \cU \mid nx \in \mathbb{Z}^{q}_+ \mbox{ for some }n\in \mathbb{Z}_+  \right \}, 
\end{equation}
where $\mathbb{Z}_+$ is the set of positive integers. 

Since the analysis for the $H$-property will be carried out in the asymptotic regime $n\to\infty$, the relevant  empirical concentration vectors are those of $G_n$ for $n$ large. 
To this end, let $\alpha\in (0,1)$ be such that~\eqref{eq:defalphalowerbound} is satisfied and $\epsilon$ be as in Lemma~\ref{lem:xbx1epsilon}. We have the following definition:

\begin{definition}\label{def:npairedx}
    Given an $x\in \cU^*$, $n\in \mathbb{Z}_+$ is  {\bf paired with} $x$ if $n > \frac{8}{\alpha\epsilon}$ and $nx$ is integer valued.  
\end{definition}

With the above, we now state the main result of this subsection:

\begin{proposition}\label{prop:propnijn}
Let $S = (U,F)$ be a connected undirected graph, with at least one odd cycle, and $\vec S = (U, \vec F)$ be its directed version.  
Then, there exist a map $A: \cU^* \mapsto \cA(S)$ and a positive number $\underline a$ such that for any $x\in \cU^*$ and for any $n$ paired with $x$, the following hold:
\begin{enumerate}
\item $A(x)\bfo = x$;
\item $nA(x)$ is integer-valued and $n \diag A(x)$ has even entries;
\item $n \|\diag A(x) -  \tau_0(x)\|_\infty \le 1$, where $\tau_0$ is defined in~\eqref{eq:defx0x1};   
\item $n |a_{ij}(x) - a_{ji}(x)|\le 1$ for all $1\leq i, j\leq q$; 
\item\label{it:underlinea}  
For any $u_iu_j \in \vec F$,  $a_{ij}(x) > \underline a$. 
\end{enumerate}
\end{proposition}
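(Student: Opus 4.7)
The plan is to construct $A(x)$ by integer-rounding a continuous matrix $A^c(x)$ built from the coefficients $\phi(x)$ of Lemma~\ref{lem:existphig0}, using $\theta$ from Lemma~\ref{lem:existtheta} to handle the off-diagonal entries. Define $A^c(x)$ by $a^c_{ii}(x):=\phi_{k_0}(x)$ when $u_i$ carries a self-loop $f_{k_0}$ (and $0$ otherwise), and $a^c_{ij}(x)=a^c_{ji}(x):=\phi_k(x)/2$ for each non-loop edge $f_k=(u_i,u_j)$; all other entries are $0$. From $Z_S\phi(x)=x$ and symmetry one obtains $A^c(x)\bfo=x$ and $\bfo^\top A^c(x)\bfo=1$, so $A^c(x)\in\cA(S)$; these continuous values serve as targets against which all subsequent rounding errors are measured.

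First, round the diagonal: for each self-loop $f_{k_0}$ at $u_i$, let $na_{ii}(x)$ be the nearest even integer to $n\phi_{k_0}(x)$, which yields property~(3) and the even-integer part of~(2). Set $y(x):=nx-n\diag A(x)$. Whenever $n$ is paired with $x$, one has $y(x)/n=\tau_1(x)+\eta$ with $\|\eta\|_\infty\le 1/n<\alpha\epsilon/8$, so Lemma~\ref{lem:supportx0x1} together with $\phi\ge\alpha\bfo$ keeps every $y_i(x)$ strictly positive, while Lemma~\ref{lem:xbx1epsilon} forces $\bar y:=y(x)/\|y(x)\|_1$ to lie in $\cV$.

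Next, round the off-diagonal. Apply $\theta$ to $\bar y$ to obtain $\theta_k(\bar y)\ge\alpha$ with $Z_{S_1}\theta(\bar y)=\bar y$, and rescale to continuous edge-flows $T^c_k:=\|y(x)\|_1\theta_k(\bar y)$ satisfying $Z_{S_1}T^c=y(x)$. The aim is to find non-negative integer $T_k$ with $Z_{S_1}T=y(x)$ and $|T_k-T^c_k|$ bounded independently of $n$: this is feasible because $\rank Z_{S_1}=q$ (from the odd cycle in $S_1$, via Lemma~\ref{lem:SoddS1odd} and~\eqref{eq:rankZSevenodd}), so $\ker Z_{S_1}$ carries an integer lattice generated by signed cycle vectors of $S_1$, with the odd cycle supplying a parity-breaking generator, and since $T^c_k\ge\alpha\|y(x)\|_1$ is large, a bounded kernel correction yields an integer non-negative $T$. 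Finally, split each $T_k=na_{ij}(x)+na_{ji}(x)$ with $|na_{ij}(x)-na_{ji}(x)|\le 1$ so that the row sums $\sum_{j\neq i}na_{ij}(x)=y_i(x)$ hold: since $\sum_{k\ni i}T_k=2y_i(x)$ is even, the subgraph of $S_1$ spanned by the edges with $T_k$ odd has every node of even degree, hence admits an Eulerian orientation; this orientation dictates which endpoint of each odd edge receives the extra $+1$ and yields the row-sum constraint.

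Properties~(1)--(4) then follow from this construction, and property~(5) holds with $\underline a$ a positive constant depending only on $\alpha$ and $S$, since the continuous diagonal values $\phi_{k_0}(x)\ge\alpha$ and the continuous edge-flows $T^c_k/n\ge\alpha\|y(x)\|_1/n$ are uniformly bounded below by Lemma~\ref{lem:supportx0x1}, while the per-entry rounding errors are $O(1/n)\le\alpha\epsilon/8$. The main obstacle is the integer rounding of $T^c$ subject to $Z_{S_1}T=y(x)$ with a perturbation bounded uniformly in $n$: this requires a concrete handle on the integer kernel of the incidence matrix, and the odd cycle hypothesis is essential both for the full rank of $Z_{S_1}$ --- yielding the clean $(|F_1|-q)$-dimensional description of $\ker Z_{S_1}$ --- and for the ability to adjust parities at individual nodes via parity-breaking cycle combinations.
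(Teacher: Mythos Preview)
Your overall architecture matches the paper's: round the diagonal of $A^c(x)$ to the nearest even integers, push the residual $y(x)=nx-n\diag A(x)$ through $\theta$ to get continuous off-diagonal targets, then round those to integers while preserving the row-sum constraints. The paper collapses the off-diagonal rounding into a single citation of~\cite[Theorem~2]{belabbas2021integer}, which takes the symmetric matrix $A'_1$ built from $\theta(\bar\tau'_1)$ and returns an $A_1\in\cA(S)$ with identical row sums and with each scaled entry $n'_1a_{1,ij}$ equal to $\lfloor n'_1 a'_{1,ij}\rfloor$ or $\lceil n'_1 a'_{1,ij}\rceil$. Your alternative --- round the undirected edge-flows $T^c$ to integers $T$ with $Z_{S_1}T=y(x)$, then split each $T_k$ into a nearly equal pair $(na_{ij},na_{ji})$ via an Eulerian orientation of the subgraph spanned by the edges with $T_k$ odd --- is a genuine variant. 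The Eulerian step is correct and self-contained (even degree at each node follows from $\sum_{k\ni i}T_k=2y_i(x)$), and it replaces part of the external black box with an explicit combinatorial argument.

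The gap is the step you yourself flag as ``the main obstacle'': the rounding $T^c\mapsto T$ with $Z_{S_1}T=y(x)$ and $\|T-T^c\|_\infty$ bounded independently of $n$. The one-line justification you offer does not close it. The phrase ``$\ker Z_{S_1}$ carries an integer lattice generated by signed cycle vectors, with the odd cycle supplying a parity-breaking generator'' conflates the oriented and unoriented incidence matrices: for the unoriented matrix $2Z_{S_1}$, a single odd cycle contributes no kernel vector (only even cycles give alternating $\pm1$ vectors; odd cycles must be combined in pairs or with paths). More importantly, a ``kernel correction'' presupposes that \emph{some} integer solution to $2Z_{S_1}T=2y(x)$ exists, which you have not established; one needs that the integer image of the $0/1$ incidence matrix of a connected non-bipartite graph is exactly $\{d\in\Z^q\mid \bfo^\top d\in 2\Z\}$ (a Smith-normal-form fact), and then a separate finiteness argument to bound the correction uniformly over the finitely many possible residuals $2y(x)-2Z_{S_1}\lfloor T^c\rfloor$. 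None of this is deep, but none of it is written, and it is precisely the content the paper delegates to~\cite{belabbas2021integer}. If you want to avoid that citation, these details must be supplied.
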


Note that item~\ref{it:underlinea} and the fact that $A(x)\in \cA(S)$ imply $\supp A(x)= \vec S$, i.e., $a_{ij} \neq 0$ if and only if $u_iu_j \in \vec F$. 

The proof of Proposition~\ref{prop:propnijn} is constructive. It will rely on a few technical facts we establish here. Let $x \in \cU^*$ and $n$ be paired with $x$.  Define  
\begin{equation}\label{eq:defx'0x'1}
\tau'_0(x):=\frac{2}{n}\left[\frac{n}{2} \tau_{0}(x) \right] \quad \mbox{and} \quad  \tau'_1(x):= x - \tau'_0(x),
\end{equation}   
where we recall that the operation $\left [\, \cdot \, \right]$ is the  integer-rounding operation, introduced in the notation of Section~\ref{sec:intro}. 
The vector $n\tau_0'(x)$ is then the vector with  {\em even}  entries closest to the entries of $n\tau_0(x)$. 
Next, we define
\begin{equation}\label{eq:defn0n1}
n'_0:=n \|\tau'_0(x)\|_1 \mbox{ and } n'_1:=n \|\tau'_1(x)\|_1
\end{equation}
Recall that  $\bar \tau'_0$ and $\bar \tau'_1$ are the $\ell_1$ normalization of $\tau'_0$ and $\tau'_1$, respectively. It should be clear from the construction that $n'_0 + n_1' = n$ and
$$
\bar \tau'_i = \frac{n}{n'_i} \tau'_i, \mbox{ for } i = 1,2.
$$ 
For a given $x\in \cU^*$, there obviously exist infinitely many positive integers $n$ that are paired with $x$. However, the ratios $n'_0/n$ and $n'_1/n$ are independent of $n$ and determined completely by~$x$. 

We also need the following lemma:
\begin{lemma}\label{lem:nijnprelim}
The following items hold:
\begin{enumerate}
    \item For $i = 1,2$, $\supp \tau'_i(x) = \supp \tau_i(x)$ and, moreover, the nonzero entries of $\tau'_i(x)$  are uniformly bounded below by $\frac{1}{2}(1-\epsilon/4)\alpha$.
    \item The ratio $n'_0/n$ is bounded below by $(1 -  \epsilon/8) \alpha |F_0|$. %where we recall that $F_0$ is the set of self-loops in $S$.  
    The ratio $n_1'/n$ is bounded below by $3q\alpha/8$. 
    \item Let $\cV$ be defined as in Lemma~\ref{lem:existtheta}. Then, $\bar \tau_1(x)\in \cV$. 
\end{enumerate}
\end{lemma}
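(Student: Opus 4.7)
The plan is to treat the three items sequentially, with item~(i) carrying essentially all of the analytic content; item~(ii) then falls out by summation, and item~(iii) is immediate from Lemma~\ref{lem:xbx1epsilon}. The workhorse is a quantitative rounding bound: by the definition~\eqref{eq:defx'0x'1}, each coordinate of $n\tau'_0(x)$ is the nearest even integer to $n(\tau_0(x))_i$, so $\|\tau_0(x) - \tau'_0(x)\|_\infty \le 1/n$, and the pairing condition $n > 8/(\alpha\epsilon)$ upgrades this to $\|\tau_0(x) - \tau'_0(x)\|_\infty < \alpha\epsilon/8$. Since $\tau'_1(x) = \tau_1(x) + (\tau_0(x) - \tau'_0(x))$ by construction, the same bound governs $\|\tau'_1(x) - \tau_1(x)\|_\infty$.

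Next I would install baseline entrywise lower bounds on $\tau_0$ and $\tau_1$ from the uniform estimate $\phi \ge \alpha\bfo$ in~\eqref{eq:defalphalowerbound}. For a node $u_i$ carrying a self-loop, exactly one column $z_j$ with $j \in \cI_0$ contributes to $(\tau_0(x))_i$, with value $\phi_j(x) \ge \alpha$; for $\tau_1$, row $i$ of $Z_{S_1}$ is nonzero (as used in the proof of Lemma~\ref{lem:supportx0x1}) and thus contains at least one entry equal to $1/2$, so $(\tau_1(x))_i \ge \alpha/2$ for every $i$. Combining these baselines with the $\alpha\epsilon/8$ perturbation yields $(\tau'_0(x))_i \ge \alpha(1 - \epsilon/8)$ on loop indices and $(\tau'_1(x))_i \ge \alpha/2 - \alpha\epsilon/8 = \tfrac12(1-\epsilon/4)\alpha$ for every $i$, and both quantities dominate the advertised threshold $\tfrac12(1-\epsilon/4)\alpha$. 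Because the perturbation is strictly smaller than each baseline, the supports of $\tau'_0$ and $\tau'_1$ coincide with those of $\tau_0$ and $\tau_1$, establishing item~(i). Summing over the $|F_0|$ loop indices then produces $n'_0/n = \|\tau'_0(x)\|_1 \ge |F_0|\,\alpha(1-\epsilon/8)$, and summing over all $q$ coordinates produces $n'_1/n \ge \tfrac{q\alpha}{2}(1-\epsilon/4) \ge 3q\alpha/8$ (using $\epsilon < \alpha/4 < 1$), which is item~(ii).

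Item~(iii) is the specialization of Lemma~\ref{lem:xbx1epsilon} at $\eta = 0$ on the shrunken neighborhood $\cU = \cU'$ of Remark~\ref{rem:cucup}: the conclusion there reads $\overline{\tau_1(x) + 0} = \bar\tau_1(x) \in \cV$, which is exactly what is claimed. I do not anticipate any genuine obstacle; the only mildly delicate point is that a single advertised threshold $\tfrac12(1-\epsilon/4)\alpha$ must simultaneously lower-bound the loop entries (which sit near $\alpha$) and the off-diagonal entries (which sit near $\alpha/2$), and the rounding perturbation must be strictly dominated by both baselines. The pairing condition $n > 8/(\alpha\epsilon)$ and the choice $\epsilon < \alpha/4$ inherited from Lemma~\ref{lem:xbx1epsilon} provide exactly the right numerical slack.
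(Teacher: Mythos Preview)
Your argument for items~(i) and~(ii) is correct and matches the paper's proof essentially line for line: the rounding bound $\|\tau_0 - \tau'_0\|_\infty \le 1/n < \alpha\epsilon/8$, the baselines $(\tau_0)_i \ge \alpha$ on loop indices and $(\tau_1)_i \ge \alpha/2$ everywhere from~\eqref{eq:defalphalowerbound}, and the summation for item~(ii). Your route to the bound $n'_1/n \ge 3q\alpha/8$ via summing the entrywise lower bound $\tfrac12(1-\epsilon/4)\alpha$ is a harmless variant of the paper's $\|\tau_1\|_1 - \|\eta\|_1$ computation.

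For item~(iii), however, you have taken the statement too literally. As written, the lemma asserts $\bar\tau_1(x) \in \cV$, and your $\eta = 0$ argument does prove exactly that; but this is a typo in the statement. What is actually needed downstream (see the construction of $A'_1$ in the proof of Proposition~\ref{prop:propnijn}, where $\theta$ is applied to $\bar\tau'_1(x)$) is $\bar\tau'_1(x) \in \cV$, and this is what the paper's proof of item~(iii) establishes: it sets $\eta := \tau'_1(x) - \tau_1(x)$, uses your own bound $\|\eta\|_\infty \le 1/n < \alpha\epsilon/8 < \epsilon$, and then invokes Lemma~\ref{lem:xbx1epsilon} with that nonzero $\eta$ to conclude $\bar\tau'_1(x) = \overline{\tau_1(x) + \eta} \in \cV$. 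Your proof of the literal statement is not wrong, but you should flag the typo and supply the primed version as well, since that is the one Proposition~\ref{prop:propnijn} relies on.
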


We provide a proof of the lemma in Appendix~\ref{app:nijnprelim}. 

With the lemma above, we now establish Proposition~\ref{prop:propnijn}:  

\begin{proof}[Proof of Proposition~\ref{prop:propnijn}]
We start by defining two matrix-valued functions $A_0(x)$ and $A_1(x)$ so that for any $x \in \cU^*$, $A_i(x) \in \cA(S)$ and ${A_i}(x)\bfo=\bar \tau'_i(x)$.  
We will then let $A(x)$ be the convex combination of these two matrices given by 
\begin{equation}\label{eq:defAinprop} A(x) = \frac{n'_0}{n} A_0(x) +\frac{n_1'}{n} A_1(x). 
\end{equation}   
Since $\cA(S)$ is convex, it will then follow that $A(x) \in \cA(S)$.

\xc{Construction of $A_0$:} The matrix $A_0(x)$ is simply given by
\begin{equation}\label{eq:defA0x}
A_0(x):=\diag \bar \tau'_0(x).
\end{equation} 
By Lemma~\ref{lem:supportx0x1}, $\supp \tau_0(x)$ is constant over $\cU$. By the first item of Lemma~\ref{lem:nijnprelim}, $\supp \tau_0(x) =\supp \tau'_0(x)$ for all $x\in \cU^*$. It then follows that $\supp A_0(x)$ is also constant over $\cU^*$. By the same item,  the nonzero entries of $A_0(x)$ are uniformly bounded below by a positive constant.

\xc{Construction of $A_1$:} The construction is more involved than the one of $A_0$, and requires to first define the intermediate matrix $A'_1$.  
To this end, recall that $Z_{S_1}$ is the edge-incidence matrix of $S_1=(U,F_1)$, obtained by removing the self-loops of $S$, and that $\theta$ is the map given in Lemma~\ref{lem:existtheta}, i.e., $Z_{S_1}\theta(y)=y$ for all $y\in \cV$. 
Given an edge $f=(u_i, u_j) \in S_1$, we denote by $\theta_{f}(y)$ the corresponding entry of $\theta(y)$.  
By item~3 of Lemma~\ref{lem:nijnprelim}, $\bar \tau'_1(x)$ belongs to $\cV$, which is the domain of $\theta$. 
Now, we define the symmetric matrix $A'_1(x) = [a'_{1,ij}(x)]\in \R^{q \times q}$ as follows:
\begin{equation}\label{eq:rhotoa}
a'_{1,ij}(x) :=
\begin{cases}
\frac{1}{2}\theta_{f}(\bar \tau'_1(x)) & \mbox{if } f=(u_i,u_j) \in F_1, \\
0 & \mbox{otherwise}.
\end{cases}
\end{equation}
In particular, the diagonal of $A'_1$ is $0$, and so will be the diagonal of $A_1$ as shown below. 
From the definition of the incidence matrix $Z_{S_1}$ and~\eqref{eq:rhotoa}, we have that $A'_1(x) \bfo  = Z_{S_1}\theta(\bar \tau'_1(x))$. By Lemma~\ref{lem:existtheta}, $Z_{S_1}\theta(\bar \tau'_1(x))=\bar \tau'_1(x)$. It then follows that 
\begin{equation}\label{eq:A1xbfox1}
A'_1(x)\bfo = \bar \tau'_1(x),
\end{equation}
and, hence, $\bfo^\top A'_1(x) \bfo = \bfo^\top \bar \tau'_1(x) = 1$.  
Furthermore, since $A'_1(x)$ is symmetric,  ${A'_1(x)}\bfo=A'_1(x)^\top \bfo$. We thus have that 
$A'_1(x)\in \cA(S)$. Since $\theta_f$ is positive for every $f\in F_1$, $\supp A'_1(x)$ is constant over $\cU$. Moreover, by~\eqref{eq:defalphalowerbound}, the nonzero entries of $A'_1(x)$ are uniformly bounded below by $\frac{1}{2}\alpha$. 

Next, we use $A'_1$ to construct $A_1$. There are two cases; one is straightforward and the other is more involved:  
\xc{Case 1:   $n_1'A'_1(x)$ is integer-valued:}  Set $A_1(x):= A'_1(x)$. 

\xc{Case 2: $n_1'A'_1(x)$ is not integer-valued:} In this case, we appeal to the result~\cite[Theorem 2]{belabbas2021integer}: 
There, we have shown that there exists a matrix $A_1(x) = [a_{1,ij}(x)]$ in $\cA(S)$, with 
\begin{equation}\label{eq:eq:A1xbfox1v2}
    A_1(x)\bfo=A'_1(x)\bfo = \bar \tau'_1(x)
\end{equation}
such that $A_1(x)$ has the same support as $A'_1(x)$ and 
$$n_1' a_{1,ij}(x) = \lfloor n_1' a'_{1,ij}(x)\rfloor \quad \mbox{or} \quad n_1' a_{1,ij}(x) = \lceil n_1' a'_{1,ij}(x)\rceil.$$ 
In particular,  $n_1'A_1(x)$ is integer-valued. 
Because $a'_{1,ij}(x) = a'_{1,ji}(x)$, it follows that 
\begin{equation}\label{eq:n1a1le1}
n_1'|a_{1,ij}(x) - a_{1,ji}(x)| \le 1, \quad \forall 1\leq i, j\leq q.   
\end{equation} 
Moreover, if $a'_{1,ij}(x) > 0$, then 
\begin{equation}\label{eq:lowerboundaij}
a_{1,ij}(x) > a'_{1,ij}(x) - \frac{1}{n_1'} \ge \alpha - \frac{1}{n_1'} \ge \alpha - \frac{1}{n}\frac{n}{n_1'}
 \ge \alpha  - \frac{8}{3nq\alpha} > \left (1  - \frac{1}{12 q} \right ) \alpha. 
\end{equation} 
where second to the last inequality follow from item~2 of Lemma~\ref{lem:nijnprelim} and the last inequality follows from the hypothesis on $n$ (specifically $n  > \frac{8}{\alpha\epsilon}$) from the statement and the condition that $\epsilon < \alpha/4$ from Lemma~\ref{lem:xbx1epsilon}.

\xc{Proof that $A$ satisfies the five items of the statement:} 
\begin{enumerate}
    \item From~\eqref{eq:defA0x}, $A_0(x)\bfo = \bar \tau'_0(x)$. For $A_1$, it was shown that $A_1(x) \bfo = \bar \tau'_1(x)$ in~\eqref{eq:A1xbfox1} and~\eqref{eq:eq:A1xbfox1v2} for Case~1 and Case~2, respectively. Since $A$ is the convex combination of $A_0$ and $A_1$ given in~\eqref{eq:defAinprop}, it follows that 
    \begin{equation}\label{eq:Axbfo=x}
    {A(x)\bfo}=\frac{n'_0}{n}\bar \tau'_0(x)+\frac{n_1'}{n}\bar \tau'_1(x) = x.
    \end{equation}

    \item By the construction of $A$ in~\eqref{eq:defAinprop} and the definitions of $A_0$ and $A_1$, the diagonal of $nA(x)$ is $$n'_0A_0(x)=n'_0\Diag \bar \tau'_0(x) = n \Diag \tau'_0(x).$$ By~\eqref{eq:defx'0x'1}, all the entries of $n \tau'_0(x)$ are even. 
    
    \item Using~\eqref{eq:defx'0x'1} again, we have that 
$$-\bfo \le n (\tau_0(x) - \tau_0'(x))  \le \bfo,$$
from which it follows that $$n \|\diag A(x) - \tau_0(x)\|_\infty = n\|\tau'_0(x) - \tau_0(x)\| \le 1.$$ 
    
    \item The off-diagonal entries $a_{ij}(x)$ of $A(x)$ are those of $\frac{n_1'}{n}A_1(x)$, which we denoted by $\frac{n_1'}{n}a_{1,ij}(x)$. 
    Thus, $$n |a_{ij}(x) - a_{ji}(x)| = n_1' |a_{1,ij}(x) - a_{1,ji}(x)| \le 1,$$ where the last inequality is~\eqref{eq:n1a1le1}.
    
    \item {\em Case 1: $S$ does not have a self-loop:} In this case, $A(x) = A_1(x)$. By construction of $A_1$, $\supp A_1(x) = \vec S$. If $A_1(x)$ is obtained via case~1 above, then, as argued after~\eqref{eq:A1xbfox1}, its nonzero entries are bounded below by $\alpha/2$. 
    Otherwise, $A_1$ is obtained via case~2 and its nonzero entries are lower bounded as shown in~\eqref{eq:lowerboundaij}. 
    \vspace{.1cm}
    
    {\em Case 2: $S$ has at least one self-loop:} In this case, $$\supp A(x) = \supp A_0(x) \sqcup \supp A_1(x).$$ 
    By construction of $A_0$ and item~1 of Lemma~\ref{lem:nijnprelim}, 
    \begin{equation}\label{eq:suppA0x}
    \supp A_0(x) = \supp \Diag \tau'_0(x) = \supp \Diag \tau_0(x) = \vec S_0,
    \end{equation} 
    where the last equality follows from Lemma~\ref{lem:supportx0x1}. Moreover, the nonzero entries of $A_0(x)$ are bounded below by $\frac{1}{2}(1-\epsilon/4)\alpha$.  
    Also, by construction of $A_1$, 
    \begin{equation}\label{eq:suppA1x}
    \supp A_1(x) = \vec S_1.
    \end{equation}
    Thus, by~\eqref{eq:suppA0x} and~\eqref{eq:suppA1x}, $\supp A(x) = \vec S$. 
    Finally, we verify that the nonzero entries of $A(x)$ are uniformly bounded below by a positive number. By item~2 of Lemma~\ref{lem:nijnprelim}, $n'_0/n$ and $n_1'/n$ are uniformly bounded below by positive numbers (note that $|F_0| \geq 1$ in the current case). Thus, using~\eqref{eq:defAinprop}, the nonzero entries of $A(x)$ are also uniformly lower bounded by a positive number. 
\end{enumerate}
This completes the  proof.
\end{proof}

\subsection{Constructing a Hamiltonian decomposition from $A(x)$}\label{ssec:constructionHfromA}
In this subsection, we construct the map $\tilde \rho: A(x) \mapsto H$ announced in Section~\ref{sec:intro}, where $A(x)$ will be taken from the statement of Proposition~\ref{prop:propnijn} and $H$ is a Hamiltonian decomposition in $G_n\sim W$, with $x$ its empirical concentration vector.   
Throughout this subsection, we assume that $W$ is a {\em binary} step-graphon, i.e., $W$ is valued in $\{0,1\}$. 

Graphs sampled from a binary step-graphon have rather rigid structures as we will describe below. We refer to them as $S$-multipartite graphs, see also Figure~\ref{fig:smulti}:

\begin{definition}[$S$-multipartite graph]\label{def:smultipartite}
    Let $S=(U,F)$ be an undirected graph, possibly with self-loops. An undirected graph $G$ is an {\bf $S$-multipartite graph}  if there exists a graph homomorphism $\pi:G \to S$, so that $$(v_i,v_j) \in E  \Rightarrow (\pi(v_i),\pi(v_j)) \in F.$$ 
    Further, $G$ is a {\bf complete $S$-multipartite graph} if $$(v_i,v_j) \in E  \Leftrightarrow (\pi(v_i),\pi(v_j)) \in F.$$ 
\end{definition}

\begin{figure}
    \centering
     \subfloat[\label{sfig1:S}]{
 \includegraphics{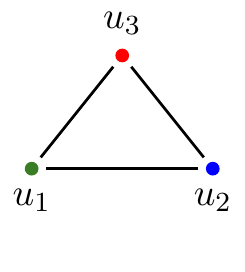}
}\,
\subfloat[\label{sfig1:SMG}]{
 \includegraphics{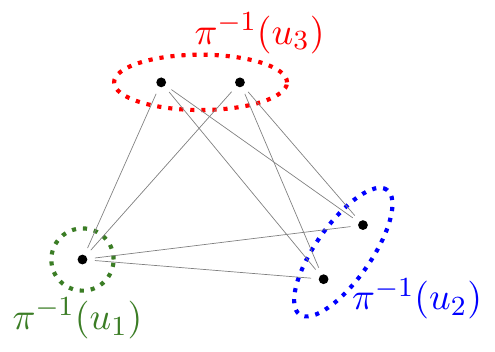}}
    \caption{{\em Left:} An undirected graph $S$ on three nodes. {\em Right:} A complete $S$-multipartite graph $M(\omega,S)$ with $\omega=(1,2,2)$.}
    \label{fig:smulti}
\end{figure}

Let $G$ be an arbitrary complete $S$-multipartite graph with $S=(U,F)$ and set $n_i := |\pi^{-1}(u_i)|$ for $i = 1,\ldots,q$. It should be clear that $G$ is completely determined by $S$ and the vector $w:=(n_1,\ldots,n_q)$ . We will consequently use the notation $M(w, S)$ to refer to a complete $S$-multipartite graph. Now, returning to the case $G_n \sim W$, where $W$ is a binary step-graphon with skeleton graph $S$, the empirical concentration vector $x(G_n)$ together with $S$ then completely determine $G_n$ as announced above.

If $G$ is a (complete) $S$-multipartite graph, then $\vec G$ is (complete) $\vec S$-multipartite, and we use the same notation $\pi$ to denote the homomorphism. We next introduce a special class of cycles in $\vec G$: 

\begin{definition}[Simple cycle]\label{def:simpledecom}
    Let $G$ be an $S$-multipartite graph, and $\pi: \vec G \to \vec S$ be the associated homomorphism. A directed cycle $D$ in $\vec G$ is called {\bf simple} if $\pi(D)$ is a cycle (rather than a closed walk) in~$\vec S$.  
\end{definition}
% We have a similar definitions for cycles of $\vec G$ mapping via $\pi:\vec G \to \vec S$

With the notions above, we state the main result of this subsection:  

\begin{proposition}\label{prop:Aham}
Let $S = (U,F)$ be an undirected graph, possibly with self-loops. 
Let $G = M(nx,S)$ be a complete $S$-multipartite graph on $n$ nodes, where $x\in \cU^*$ and $n$ is paired with $x$ (see Definition~\ref{def:npairedx}). 
Let $A(x)$ be as in Proposition~\ref{prop:propnijn} and  
\begin{equation}\label{eq:defmij}
m_{ij}(x):= n\min\{a_{ij}(x), a_{ji}(x)\},  \mbox{ for all } 1 \leq i,j\leq q.
\end{equation}
Then, there exists a Hamiltonian decomposition $H$ of $\vec G$, with $\rho(H) = A(x)$,  
such that the following hold: 
\begin{enumerate}
    \item\label{it:lowerboundselfloop} There exist exactly $\frac{1}{2} m_{ii}(x)$  disjoint 2-cycles in $H$ pairing $m_{ii}(x)$ nodes in $\pi^{-1}(u_i)$ for every $i = 1,\ldots, q$;
    \item\label{it:lowerboundedges}  There are at least $m_{ij}(x)$ disjoint 2-cycles in $H$ pairing nodes in $\pi^{-1}(u_i)$ to nodes in $\pi^{-1}(u_j)$  for each $(u_i,u_j) \in F_1$.  
    \item\label{it:upboundsimp3} There are at most $\left\lceil\frac{2}{3}|F|\right\rceil$ cycles of length three or more in $H$;  
    \item\label{it:upboundlengthcycle} The length of every cycle of $H$ does not exceed $2|F|$;
    \item\label{it:cycle3simple} All cycles of length at least $3$ of $H$ are simple.
\end{enumerate}
\end{proposition}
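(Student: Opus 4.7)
The plan is to build $H$ in two stages. The \emph{symmetric stage} realizes items~\ref{it:lowerboundselfloop} and~\ref{it:lowerboundedges} by producing disjoint 2-cycles whose aggregate edge counts between pairs of classes match $m_{ij}(x)$. The \emph{asymmetric stage} handles the residual ``defect'' between $nA(x)$ and its symmetric part $[m_{ij}(x)]$ via a cycle decomposition carried out on the skeleton $\vec S$ and then lifted into $\vec G$; this stage will realize items~\ref{it:upboundsimp3}--\ref{it:cycle3simple}.

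For the symmetric stage, recall that by item~2 of Proposition~\ref{prop:propnijn} each diagonal quantity $m_{ii}(x) = na_{ii}(x)$ is a nonnegative even integer and each off-diagonal $m_{ij}(x)$ is a nonnegative integer. For every $u_i \in U$ bearing a self-loop, select $m_{ii}(x)$ nodes in $\pi^{-1}(u_i)$ and pair them arbitrarily into $\tfrac{1}{2}m_{ii}(x)$ directed 2-cycles: this is possible because $\pi^{-1}(u_i)$ is a clique in the binary setting. For each $(u_i, u_j) \in F_1$, select $m_{ij}(x)$ further (unused) nodes from each of $\pi^{-1}(u_i)$ and $\pi^{-1}(u_j)$ and pair them into $m_{ij}(x)$ directed 2-cycles across the two classes. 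Node-disjointness holds because for each $i$, the total consumption $m_{ii}(x) + \sum_{j\neq i} m_{ij}(x) \leq na_{ii}(x) + \sum_{j\neq i} na_{ij}(x) = nx_i = |\pi^{-1}(u_i)|$.

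For the asymmetric stage, set $b_{ij}(x) := na_{ij}(x) - m_{ij}(x) = \max\{0,\, na_{ij}(x) - na_{ji}(x)\}$. Item~4 of Proposition~\ref{prop:propnijn} forces $b_{ij}(x) \in \{0,1\}$ for $i\neq j$, and $b_{ii}(x)=0$. The identity $A(x)\bfo = A(x)^\top \bfo$ (from $A(x) \in \cA(S)$) yields $r_i := \sum_j b_{ij}(x) = \sum_j b_{ji}(x)$ for every $i$. Hence the directed graph $B$ on $U$ with arc multiplicities $b_{ij}(x)$ is balanced, has no self-loops, and contains no 2-cycles. A standard Eulerian argument (take an Eulerian decomposition of each balanced component of $B$ into closed walks and split each walk at its first repeated vertex) decomposes $B$ as a disjoint union of simple directed cycles $D_1,\ldots,D_l$ in $\vec S$, each of length at least 3 and at most $|F_1| \leq |F|$. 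Each $u_i$ appears across $D_1,\ldots,D_l$ exactly $r_i$ times, once per outgoing arc; a short count shows that $\pi^{-1}(u_i)$ has precisely $r_i$ unused nodes after the symmetric stage. We may therefore lift each $D_k = u_{i_1}\cdots u_{i_{k'}} u_{i_1}$ to a directed cycle in $\vec G$ by assigning distinct unused nodes of $\pi^{-1}(u_{i_t})$ to the successive occurrences of $u_{i_t}$; the required arcs exist because $G$ is a complete $S$-multipartite graph.

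Combining the 2-cycles from the symmetric stage with the lifted cycles from the asymmetric stage defines $H$. Items~\ref{it:upboundlengthcycle} and~\ref{it:cycle3simple} are immediate by construction, and item~\ref{it:upboundsimp3} follows from $3l \leq \sum_{i\neq j} b_{ij}(x) \leq |F_1| \leq |F|$, which gives $l \leq \lfloor |F|/3 \rfloor \leq \lceil 2|F|/3 \rceil$. Finally, $\rho(H) = A(x)$ holds because the number of $H$-edges from $\pi^{-1}(u_i)$ to $\pi^{-1}(u_j)$ totals $m_{ij}(x) + b_{ij}(x) = na_{ij}(x)$. The main obstacle lies in the asymmetric stage: one must both justify the decomposition of $B$ into \emph{simple} cycles of $\vec S$ (rather than merely closed walks) and verify that the per-class visit counts $r_i$ of this decomposition match the per-class leftover counts after the symmetric stage. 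Both hinge squarely on the near-symmetry bound $|na_{ij}(x) - na_{ji}(x)| \leq 1$ supplied by item~4 of Proposition~\ref{prop:propnijn}, which keeps $B$ a simple balanced digraph with no 2-cycles.
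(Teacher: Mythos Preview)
Your proof is correct and follows the same two-stage architecture as the paper: first peel off $2$-cycles accounting for the symmetric part $[m_{ij}(x)]$, then cover the residual by simple cycles lifted from the skeleton. The paper packages the second stage into a separate Lemma~\ref{lem:redH}, proved by induction on the number of leftover nodes, which applies to any $A'\in\cA(S)$ with $\diag A'=0$; you instead exploit directly that the residual matrix $[b_{ij}]$ is $\{0,1\}$-valued, loopless, and has no $2$-cycles (all consequences of item~4 of Proposition~\ref{prop:propnijn}), so that an Eulerian decomposition of the balanced digraph $B$ on $U$ immediately yields simple directed cycles in $\vec S$ of length~$\ge 3$. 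This shortcut is legitimate and in fact yields tighter constants than the paper records: you get $n'=\sum_{i\neq j}b_{ij}\le |F_1|$ rather than $n'\le 2|F|$, hence cycle lengths $\le |F_1|$ and $l\le \lfloor |F_1|/3\rfloor$. The trade-off is that the paper's Lemma~\ref{lem:redH} is reusable for arbitrary integer-valued residuals, whereas your argument is tailored to the near-symmetry bound $|na_{ij}-na_{ji}|\le 1$; for the purposes of this proposition, your route is the more economical one.
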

We illustrate the Proposition on an example.

\begin{example}\label{ex:propaham}\normalfont
Consider a complete $S$-multipartite graph $G$ for $S$ shown in Figure~\ref{sfig1:S}. Set $n_i := |\pi^{-1}(u_i)|$, for $i= 1,2,3$, $n := \sum_{i = 1}^3 n_i$, and $x:= \frac{1}{n}(n_1,n_2,n_3)$. In this case, $x\in \rint \Delta^{2}$ if and only if the $n_i$'s satisfy triangle inequalities $n_i + n_j > n_k$, where $i$, $j$, and $k$ are pairwise distinct. If these inequalities are satisfied, then $\vec G$ admits a Hamiltonian decomposition $H$, which is comprised primarily (if not entirely) of 2-cycles. We plot in Figure~\ref{fig:example} the corresponding undirected edges of $G$.  Specifically, there are two cases: (1) If $n_1 - n_2 + n_3$ is even, then $H$ is comprised solely of 2-cycles as shown in Figure~\ref{sfig1:FEVEN}. (2) If $n_1 - n_2 + n_3$ is odd, then $H$ is comprised of 2-cycles and a single triangle as shown in Figure~\ref{sfig2:FNOTEVEN}.    
\end{example}

\begin{figure}
    \centering
    \subfloat[\label{sfig1:FEVEN}]{
\includegraphics{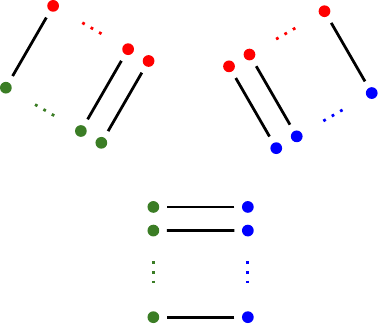}
} \quad 
\subfloat[\label{sfig2:FNOTEVEN}]{
\includegraphics{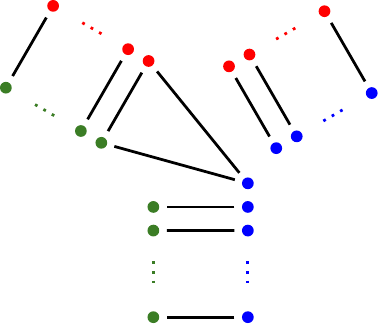}}
    \caption{Illustration of Example~\ref{ex:propaham}: Two complete $S$-multipartite graphs $G$ with $S$ shown in Figure~\ref{sfig1:S}, where the undirected edges plotted in each subfigure give rise to a Hamiltonian decomposition $H$ of $\vec G$. Green nodes belong to $\pi^{-1}(u_1)$, blue nodes to $\pi^{-1}(u_2)$, and red nodes to $\pi^{-1}(u_3)$.  
    }
    \label{fig:example}
\end{figure}

The proof of Proposition~\ref{prop:Aham} relies on a reduction argument for both the graph $G_n$ and the matrix $A(x)$: roughly speaking, we will first remove out of $\vec G_n$ a number of 2-cycles, which leads to a graph $\vec G'$ of smaller size. With regards to the matrix $A$, this reduction leads to another matrix $A'\in \cA(S)$ with the property that $\diag(A') = 0$.  Finding a Hamiltonian decomposition $H$ for $\vec G$ with $\rho(H)=A$ is then reduced to finding a Hamiltonian decomposition $H'$ for $\vec G'$ with $\rho(H') = A'$. 
For the arguments outlined above, we need a supporting lemma stated below, whose proof is relegated to Appendix~\ref{app:prop3}:

\begin{lemma}\label{lem:redH}
Let $n'$ be a nonnegative integer. 
Let $A' \in \cA(S)$ be such that $n'A'$ is integer-valued, $\diag(A')=0$, and  $x':={A'}\bfo$. Then, there exists a Hamiltonian decomposition $H'$ of $\vec G'$, where $G' :=M(n'x',S)$ is the complete $S$-multipartite graph, such that $\rho(H') = A'$ and every cycle in $H'$ is simple. 
\end{lemma}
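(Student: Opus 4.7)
The plan is to build $H'$ by first decomposing an auxiliary directed multigraph on $U$ into simple directed cycles, then lifting each such cycle to $\vec G'$ through bijections between cycle-visits and nodes of $\vec G'$.

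First, form the directed multigraph $\vec S'$ on node set $U$ by placing $n'a'_{ij}$ parallel directed edges from $u_i$ to $u_j$ for each ordered pair $(i,j)$. Since $A'\in \cA(S)$ with $\diag(A')=0$, every edge of $\vec S'$ lies in $\vec S_1$, and the total edge count equals $n'\bfo^\top A'\bfo=n'$. The balance identity $A'\bfo=(A')^\top\bfo$ guarantees that in- and out-degrees coincide at every $u_i$, both equalling $n'x'_i$. A standard Eulerian argument on balanced directed multigraphs then yields a partition of the edge set of $\vec S'$ into simple directed cycles $C_1,\ldots,C_m$ of $\vec S$, each visiting pairwise distinct nodes: iteratively start at a node with a remaining out-edge, follow out-edges until some node is revisited, split off the resulting simple cycle, remove its edges, and recurse (cycle removal preserves the balance property at every vertex).

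Next, lift each $C_j$ to a cycle $D_j$ in $\vec G'$. For each $u_i\in U$, the total number of occurrences of $u_i$ across $C_1,\ldots,C_m$ equals the out-degree $n'x'_i$ of $u_i$ in $\vec S'$, which is exactly $|\pi^{-1}(u_i)|$ in $G'$. Fix an arbitrary bijection $\beta_i$ between the occurrences of $u_i$ across all the $C_j$'s and the nodes of $\pi^{-1}(u_i)$, and replace every occurrence of $u_i$ in each $C_j$ by its image under $\beta_i$. This produces directed cycles $D_1,\ldots,D_m$ in $\vec G'$: each $D_j$ has pairwise distinct nodes since $C_j$ is a simple cycle (so the lifted nodes live in disjoint preimages) and since $\beta_i$ assigns distinct nodes to distinct visits; each consecutive edge $u_i\to u_l$ of $C_j$ lifts to a directed edge of $\vec G'$ because $(u_i,u_l)\in F$ and $\vec G'$ is the directed version of a complete $S$-multipartite graph.

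To conclude, the $D_j$'s are pairwise node-disjoint because each $\beta_i$ is injective, and collectively they cover all $\sum_i n'x'_i=n'$ nodes of $\vec G'$, so $H':=\bigcup_j D_j$ is a Hamiltonian decomposition. Each $D_j$ is simple by construction, since $\pi(D_j)=C_j$ is a cycle of $\vec S$. Finally, the number of directed edges of $H'$ from $\pi^{-1}(u_i)$ to $\pi^{-1}(u_j)$ equals the edge-multiplicity of $(u_i,u_j)$ in $\vec S'$, which is $n'a'_{ij}$; hence $\rho(H')=A'$. The main conceptual step is recognizing that the construction reduces to an Eulerian decomposition of the balanced multigraph $\vec S'$ into simple cycles (which is what forces $\diag(A')=0$ to play its role in the hypothesis), while the remainder is a bookkeeping argument via the bijections $\beta_i$.
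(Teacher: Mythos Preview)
Your proof is correct and follows essentially the same approach as the paper's. The paper argues by induction on $n'$, at each step finding one simple cycle in the balanced weighted digraph on $U$, lifting it to $\vec G'$ by selecting one node from each relevant $\pi^{-1}(u_{i_j})$, and recursing on the smaller graph; your version simply unrolls this induction into two phases (first extract all simple cycles from the balanced multigraph, then lift them via the bijections $\beta_i$), which is the same argument presented more globally.
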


With the lemma above, we now establish Proposition~\ref{prop:Aham}:

\begin{proof}[Proof of Proposition~\ref{prop:Aham}]
We construct a Hamiltonian decomposition $H$ with the desired properties in two steps. We will fix $x$ in the proof and, to simplify the notation, we omit writing the argument $x$ for $a_{ij}(x)$, $m_{ij}(x)$, and $A(x)$.

\xc{Step 1:} We claim that the following selection of 2-cycles out of $\vec G_n$ is feasible: 
\begin{itemize}
    \item For every self-loop $(u_i,u_i)\in F_0$, $m_{ii} = na_{ii}$ is an even integer, and we select $m_{ii}$ pairwise distinct nodes in $\pi^{-1}(u_i)$ that form $m_{ii}/2$ disjoint 2-cycles.   
    \item For every $(u_i,u_j)\in F_1$, we select $m_{ij}$ distinct nodes in  $\pi^{-1}(u_i)$ and $m_{ij}$ distinct nodes in $\pi^{-1}(u_j)$,  to form $m_{ij}$ disjoint 2-cycles (so the total number of such 2-cycles is $\sum_{(u_i,u_j)\in F_1} m_{ij}$). 
\end{itemize}
The above selection is feasible because (1) $G_n$ is a complete $S$-multipartite graph and, thus, there is an edge between any pair of nodes in $\pi^{-1}(u_i)$, $\pi^{-1}(u_j)$ provided that $(u_i,u_j)\in F_1$ and, (2) $A\bfo = x$ which implies that $\sum^q_{j = 1} m_{ij} \le n_i$ and, hence, we can always pick the required number of distinct nodes.

Let $V'$ be the set of remaining nodes in $G$, i.e., $V'$ is obtained by  removing out of $V$ the 
$\sum_{(u_i,u_j)\in F} m_{ij}$ nodes picked in step 1. 
If $V'$ is the empty set, we let $H$ be the union of the disjoint 2-cycles just exhibited. 
It should be clear that $H$ is a  Hamiltonian decomposition of $\vec G$. 
We claim that $H$ satisfies the desired properties. To see this, let $A':=\rho(H)$ and $x':= {A'}\bfo$. Then, $A'\in \cA(S)$ and, by construction of $H$, $na'_{ij}=m_{ij}$.
On the one hand, since $V'$ is empty, $\|nx'\|_1 = \|nx\|_1 = n$ and, hence, the sum of the entries of $A'$ is equal to the sum of the entries of $A$.
On the other hand, since $m_{ij} \le n a_{ij}$, we have that $A' = \frac{1}{n}[m_{ij}]_{ij} \le A $. It then follows that $A' = A$ and $x' = x$. 
Furthermore, items 1 and 2 follow from Step 1, respectively, and items 3, 4, and 5 hold trivially. 

\xc{Step 2:} 
We now assume that $V'$ is non-empty. Let $G'$ be the subgraph of $G$ induced by $V'$. We exhibit below a Hamiltonian decomposition $H'$ for $\vec G'$ such that the cycles in $H'$, together with the 2-cycles constructed in Step~1, yield a desired~$H$.  
Additionally, we will show that all the cycles of length at least 3 in $H'$ satisfy items 3, 4, and 5.  

To construct the above mentioned $H'$, we will appeal to Lemma~\ref{lem:redH}. 
To this end, let $n'_i$ be the number of nodes in $\pi^{-1}(u_i) \cap V'$, i.e.,
$$n'_i = n_i - \sum_{u_j \in N(u_i)} m_{ij}.$$ 
Let $n':= \sum^q_{i = 1} n'_i$ be the total number of nodes in $G'$, and 
$$x':= \frac{1}{n'}[n'_1;\cdots;n'_q].$$ 
It follows that $G'=M(n'x',S)$.

Because $H$ has to satisfy $\rho(H) = A$ with $A\bfo = x$, $H$ should contain $na_{ij}$ edges from nodes in $\pi^{-1}(u_i)$ to nodes in $\pi^{-1}(u_j)$. 
Since $m_{ij}$ such edges have already been accounted for by the 2-cycles created in Step 1, we need an additional $n'_{ij}$ edges, where 
\begin{equation}\label{eq:defn'ij}
n'_{ij}:= 
\left\{
\begin{array}{ll}
n a_{ij} - m_{ij} & \mbox{if $(u_i,u_j)\in F$}, \\
0 & \mbox{otherwise}.
\end{array}
\right.
\end{equation}
Note that by~\eqref{eq:defmij}, $n'_{ii} = 0$ for all $i = 1,\ldots, q$. 
Correspondingly, we define a $q\times q$ matrix as follows:
\begin{equation*}
A' := \frac{1}{n'} [n'_{ij}].
\end{equation*} 
Because $m_{ij} =  m_{ji}$ for all $(u_i,u_j)\in F$ and because $A\bfo = A^\top \bfo$, we obtain that
$$
\sum^q_{j = 1} n'_{ij} = \sum^q_{j = 1} n'_{ji} = n'_i, \quad\forall i = 1,\ldots, q. 
$$ 
Thus, $A'\in \cA(S)$ and, by construction, $\diag A' = 0$ and ${A'}\bfo = x'$, so $A'$ satisfies the conditions in the statement of Lemma~\ref{lem:redH}.  

By Lemma~\ref{lem:redH}, there exists a Hamiltonian decomposition $H'$ of $\vec G'$ such that $\rho(H') = A'$, ${A'}\bfo=x'$, and all cycles in $H'$ are simple. 
Now, let $H$ be the union of $H'$ and the 2-cycles
obtained in Step 1. Then,   
$$\rho(H) = \frac{1}{n}[m_{ij} + n'_{ij}] = \frac{1}{n}[na_{ij}] = A,$$
where the second equality follows from~\eqref{eq:defn'ij}. 
Moreover, since $\diag A' = 0$, there is no $2$-cycle in $H'$ connecting pairs of nodes in  $\pi^{-1}(u_i)$ for any $i = 1,\ldots, q$. Thus, for each $i = 1,\ldots, q$, $H$ contains {\em exactly} $\frac{1}{2} m_{ii}$ disjoint two-cycles pairing $m_{ii}$ nodes in $\pi^{-1}(u_i)$.

It now remains to show that all the cycles of length at least three in $H'$ satisfy items~3 and~4. To do so, we first provide an upper bound on $n'_i$:   
Using items~3 and~4 of Proposition~\ref{prop:propnijn}, we have that $n a_{ij} - m_{ij}\leq 1$. Thus, 
$$
n'_i  \le n_{i}-\sum_{u_j \in N(u_i)}  (n a_{ij} - 1) =   n_i - n_i + \deg(u_i) =  \deg(u_i),
$$
where $\deg(u_i)$ is the degree of $u_i$ in $S$. 
Since $$\sum^q_{i = 1} \deg(u_i) \leq 2|F|,$$ there are at most $2|F|$ nodes in $\vec G'$. 
Consequently, the length of any cycle in $H'$ is bounded above by $2|F|$ and, moreover, 
there exist at most $\left\lceil\frac{2}{3}|F|\right\rceil$ cycles of length three or more in $H'$. This completes the proof.  
\end{proof}

\begin{remark}\normalfont
The fact that item~2 of the proposition provides a lower bound for the number of 2-cycles instead of an exact number can be understood as follows: The Hamiltonian decomposition $H'$ of $\vec G'$, introduced in Step~2 of the proof, may contain additional 2-cycles pairing nodes from $\pi^{-1}(u_i)$ to $\pi^{-1}(u_j)$, for $(u_i,u_j)\in F_1$.  
\hfill \qed
\end{remark}

\subsection{Proof of the Main  Theorem}\label{ssec:proofth2}

In Subsection~\ref{ssec:constructionHfromA}, we dealt with the construction of a Hamiltonian decomposition in a graph $\vec G_n$  sampled from a {\em binary} step graphon. We will now extend the result to a general step-graphon $W$, for which the existence  of an edge  between a pair of nodes is not a sure event. This will then complete the proof of the Main  Theorem.
 
\begin{figure}
    \centering
 \includegraphics{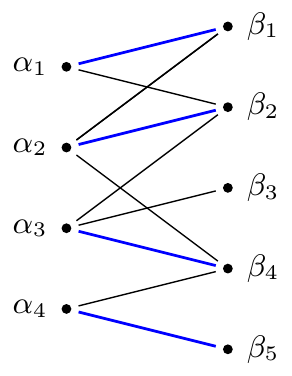}
    \caption{A bipartite graph with $V_L=\{\alpha_1,\ldots,\alpha_4\}$ and $V_R=\{\beta_1,\ldots,\beta_5\}$. The blue edges form a left-perfect matching.}
    \label{fig:bipartite}
\end{figure}

To do so, we first recall some known facts about bipartite graphs. An undirected graph $B=(V,E)$ is called {\em bipartite} if its node set can be written as the disjoint union $V = V_L \sqcup V_R$ so that there does not exist an edge between two nodes in $V_L$ or~$V_R$. Equivalently, a bipartite graph can be viewed as an $S$-multipartite graph where $S$ is a graph with two nodes connected by a single edge.  
We refer to elements of $V_L$ and $V_R$ as {\em left-} and {\em right-nodes}, respectively. A {\em left-perfect matching} $P$ in $B$ is a set of edges so that each left-node is incident to {\em exactly} one edge in $P$, and each right-node is incident to {\em at most} one edge in $P$. 
See Figure~\ref{fig:bipartite} for an illustration. 
Similarly, we define a {\em right-perfect matching} by swapping the roles of left- and right-nodes. 
One can easily see that a left-perfect (resp.~right-perfect) matching exists only if $|V_L| \leq |V_R|$ (resp.~$|V_L| \geq |V_R|$).

Further, we denote by $B(n_1,n_2,p)$ an Erd\H os-R\'enyi {\em random} bipartite graph, with $n_1$ left-nodes, $n_2$ right-nodes, and edge probability $p$ for all edges between left- and right-nodes. 

We need the following fact:   

\begin{lemma}\label{lem:RGfacts}
Let $n_1$ and $n_2$ be positive integers such that $\frac{1}{\kappa}\le  \frac{n_2}{n_1} \le \kappa$, where $\kappa\geq 1$ is a constant. Let $n:= n_1 + n_2$ and $p\in (0,1)$. Then, it holds \as~that the random bipartite graph $B(n_1,n_2,p)$ is connected and contains a left-perfect (resp. right-perfect) matching if $n_2 \geq n_1$ (resp. $n_1\geq n_2$). 
\end{lemma}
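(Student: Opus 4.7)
The plan is to prove both conclusions --- existence of a one-sided perfect matching and connectedness --- by first-moment union bound arguments, exploiting that $p \in (0,1)$ is fixed while $n_1, n_2 \to \infty$ (forced by $n \to \infty$ together with the ratio bound $n_i \ge n/(1+\kappa)$). The common feature is that every ``bad'' combinatorial witness we enumerate specifies $\Omega(n)$ independent non-edges, yielding an $(1-p)^{\Omega(n)}$ probability that overwhelms the $e^{O(n)}$ count of witnesses.

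For the matching, assume without loss of generality $n_1 \le n_2$ (the opposite case is symmetric). By \emph{Hall's marriage theorem}, a left-perfect matching exists iff $|N(S)| \ge |S|$ for every $S \subseteq V_L$. For fixed $S$ of size $s$ and $T \subseteq V_R$ of size $s-1$, the event $N(S) \subseteq T$ forces the $s(n_2-s+1)$ potential edges between $S$ and $V_R \setminus T$ to be absent, so a union bound yields
\[
\mathbb{P}(\text{no left-perfect matching}) \;\le\; \sum_{s=1}^{n_1} \binom{n_1}{s}\binom{n_2}{s-1}(1-p)^{s(n_2-s+1)}.
\]
I would show each summand is $e^{-\Omega(n)}$ by splitting at $s = n_1/2$: for $s \le n_1/2$ the exponent is at least $sn_2/2$, which combined with $\binom{n_1}{s}\binom{n_2}{s-1} \le n^{2s}$ gives each term at most $\exp\bigl(2s\log n - csn_2/2\bigr)$ with $c := \log(1/(1-p)) > 0$; for $s > n_1/2$, substitute $k := n_1 - s$ to rewrite the exponent as $(n_1-k)(n_2-n_1+k+1)$ and the binomial product as $\binom{n_1}{k}\binom{n_2}{n_2-n_1+k+1}$, both of which admit entropy-type bounds dominated by the exponential decay in $n$. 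Summing over $s$ then yields $o(1)$.

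For connectedness, $B(n_1, n_2, p)$ is disconnected iff there exist $A_L \subseteq V_L$ and $A_R \subseteq V_R$ with $(|A_L|,|A_R|) \notin \{(0,0),(n_1,n_2)\}$ such that no edge runs from $A_L$ to $V_R \setminus A_R$ nor from $A_R$ to $V_L \setminus A_L$. Writing $a = |A_L|$ and $b = |A_R|$, this prescribes $a(n_2-b) + b(n_1-a)$ absent edges, giving the union bound
\[
\mathbb{P}(\text{disconnected}) \;\le\; \sum_{(a,b)} \binom{n_1}{a}\binom{n_2}{b}(1-p)^{a(n_2-b)+b(n_1-a)}.
\]
By the symmetry $(a,b) \leftrightarrow (n_1-a, n_2-b)$ one restricts attention to $a+b \le n/2$, where the exponent is at least $\tfrac{1}{2(1+\kappa)}(a+b)n$ while the binomial product is bounded by $n^{a+b}$; each summand is thus $e^{-\Omega((a+b)n)}$, and the sum is $o(1)$.

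The main obstacle is controlling the Hall union bound uniformly in $s$: the exponent $s(n_2-s+1)$ can degenerate to $O(n)$ in the regime $s$ close to $n_1$ (which is possible only when $n_1 \approx n_2$), so the naive dichotomy $s \le n_2/2$ versus $s > n_2/2$ is not enough and the substitution $k = n_1 - s$ together with the ``complementary'' entropy estimate is needed. Once this tail is handled, combining the two bounds shows that $B(n_1, n_2, p)$ is simultaneously connected and admits the desired one-sided perfect matching with probability $1 - o(1)$, which is the asserted \as~statement under the paper's convention.
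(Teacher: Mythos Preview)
Your approach is sound and reaches the conclusion, but it differs markedly from the paper's own proof. The paper does not compute any union bounds directly: it invokes the classical Erd\H os--R\'enyi result that $B(m,m,p)$ has a perfect matching \as, embeds $B(n_1,n_1,p)$ inside $B(n_1,n_2,p)$, and is done with the matching; for connectedness it cites the analogous ``well known'' fact for $B(m,m,p)$ and covers $V_R$ by $\lceil\kappa\rceil$ size-$n_1$ subsets so that connectedness of each balanced sub-bipartite graph forces connectedness of the whole. Your first-moment route via Hall's theorem and cut-enumeration is more self-contained (no black-box citations) and yields explicit decay rates, at the cost of the bookkeeping you already flag in the $s\approx n_1$ regime.

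One small correction: your claimed lower bound $a(n_2-b)+b(n_1-a)\ge \tfrac{1}{2(1+\kappa)}(a+b)n$ does not hold uniformly on the region $a+b\le n/2$ once $\kappa>1$. For instance, when $a>n_1/2$ and $b\le n_2/2$ the second term $b(n_1-a)$ can be small, and the best one gets from $a(n_2-b)\ge an_2/2$ together with $a\ge (a+b)/(1+\kappa)$ and $n_2\ge n/(1+\kappa)$ is $E\ge (a+b)n/\bigl(2(1+\kappa)^2\bigr)$. This weaker constant is of course still enough for the argument, so the proof goes through after adjusting the constant (or after splitting into the four sub-cases $a\lessgtr n_1/2$, $b\lessgtr n_2/2$).
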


The above lemma is certainly well known. For completeness of presentation, we present a proof in Appendix~\ref{app:bipartite}.

We now return to the proof of Main Theorem. For the given step-graphon $W$, we fix a partition $\sigma$, and let $x^*$ be the associated concentration vector and $S$ be the skeleton graph. We now consider a sequence of graphs $G_n \sim W$, with $n\to \infty$.  
We show below that the Hamiltonian decomposition~$H$ for $\vec G_n$ described in Proposition~\ref{prop:Aham} exists \as.

Denote by $W^s$ the {\em saturation} of $W$: it is the binary step-graphon defined as $$W^s(s,t) = 1 \Longleftrightarrow W(s,t) \neq 0.$$ 
We similarly construct a {\em saturated} version of $G_n=(V_n,E_n)\sim W$,  denoted by $G_n^s  = (V_n, E_n^s)$,  as follows: There is an edge $(v_\ell, v_k) \in  E_n^s$ if and only if $(\pi(v_\ell),\pi(v_k))\in F$. Said otherwise, the node set of $G_n$ and $G_n^s$ are the same, but the edges in $G_n^s$ are obtained using the {\em binary} step-graphon $W^s$. It should be clear that $G_n \subseteq G_n^s = M(nx(G_n), S)$, where we recall that $x(G_n)$ is the empirical concentration vector of $G_n$ defined in~\eqref{eq:defecv}. 
  
Let $\cU$ be the closed neighborhood of $x^*$ mentioned in Remark~\ref{rem:cucup}.  
Let $\mathcal{E}_0$ be the event that the empirical concentration vector~$x(G_n)$ of $G_n$ belongs to $\cU$. By Chebyshev's inequality, we have that
$$
\mathbb{P}(\|x(G_n)-x^*\| > \epsilon)\leq \frac{c}{n^2\epsilon^2}, 
$$
which implies that $\mathcal{E}_0$ is almost sure as $n\to \infty$. Thus, we can assume in the sequel that $\mathcal{E}_0$ is true, i.e., the analysis and computation carried out below are conditioned upon $\mathcal{E}_0$.

Note that $nx(G_n)$ is always integer-valued. Since $x(G_n)\in \cU$ by assumption, 
we let $n$ be sufficiently large so that $n$ is paired with $x(G_n)$ (see Definition~\ref{def:npairedx}). We can thus appeal to Proposition~\ref{prop:propnijn} to obtain a matrix $A(x(G_n))$, and to Proposition~\ref{prop:Aham} to obtain a corresponding  Hamiltonian decomposition $H$ of $\vec G_n^s$. 
We now demonstrate that the same  $H$  exists \as~in $\vec G_n$, up to re-labeling of the nodes of $\vec G_n$. The  proof comprises two parts: In part~1, we  show that the cycles in $H$ whose lengths are greater than $2$ exist \as~in $\vec G_n$ and, then, in part~2, we show that the 2-cycles of $H$ do as well.

\vspace{.2cm}
\noindent
{\em Part 1: On cycles of length greater than~2.} For clarity of presentation, we denote by $\pi^s: G_n^s \to S$ the graph homomorphisms associated with $G^s_n$.  
For any {\em path} $u_1\cdots u_k$ in $S$, since $G^s_n$ is complete  $S$-multipartite, there surely exists a {\em path} $v_1\cdots v_k$ in $G^s_n$ so that $\pi^s(v_i)=u_i$. The following result shows that such a  path exist in $G_n$  \as. 

\begin{lemma}\label{lem:pathsimpleGn}
%Let $W$ be a step-graphon with skeleton graph $S$, and $G_n \sim W$. 
Let $u_1\cdots u_k$ be a path in $S$. Then, it is \as~that there exists a path $v_1\cdots v_k$ in $G_n$, with $\pi(v_i) = u_i$.  
\end{lemma}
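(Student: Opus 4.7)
The plan is to show this via a direct two-step argument: first establish that, almost surely, every node in $\pi^{-1}(u_i)$ has at least one neighbor in $\pi^{-1}(u_{i+1})$ for each $i=1,\ldots,k-1$, and then construct the desired path greedily from this fact.

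For the first step, I would condition on the almost sure event $\mathcal{E}_0$ that $x(G_n)\in\cU$. Since $\cU$ is a closed neighborhood of $x^*$ and every entry of $x^*$ is strictly positive (because $\sigma_i>\sigma_{i-1}$ for all $i$), I can take $\cU$ small enough that each coordinate of any $x\in\cU$ is bounded below by a constant $c>0$. Consequently, for every index $i$, $N_i:=|\pi^{-1}(u_i)|\geq cn$. Next, for each edge $(u_i,u_{i+1})$ of the given path in $S$, the graphon $W$ takes some value $p_i>0$ on the corresponding rectangle; edges of $G_n$ between $\pi^{-1}(u_i)$ and $\pi^{-1}(u_{i+1})$ are placed independently with probability $p_i$. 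Thus the probability that a fixed node of $\pi^{-1}(u_i)$ has no neighbor in $\pi^{-1}(u_{i+1})$ is $(1-p_i)^{N_{i+1}}\leq(1-p_i)^{cn}$.

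Applying a union bound over all $i=1,\ldots,k-1$ and all nodes in $\pi^{-1}(u_i)$, the probability that the desired property fails is bounded above by
\[
\sum_{i=1}^{k-1} N_i(1-p_i)^{N_{i+1}} \;\leq\; n(k-1)\max_i(1-p_i)^{cn},
\]
which tends to $0$ as $n\to\infty$. So almost surely every node of $\pi^{-1}(u_i)$ has some neighbor in $\pi^{-1}(u_{i+1})$ for all $i$.

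On this event, the construction of the path is immediate: pick any $v_1\in\pi^{-1}(u_1)$ (non-empty since $N_1\geq cn$), then inductively choose $v_{i+1}\in\pi^{-1}(u_{i+1})$ to be any neighbor of $v_i$, which exists by the established property. Because $u_1,\ldots,u_k$ are distinct (paths in $S$ do not repeat nodes) and $\pi(v_i)=u_i$, the preimages $\pi^{-1}(u_i)$ are pairwise disjoint, so the resulting $v_i$ are automatically distinct. Hence $v_1\cdots v_k$ is a path in $G_n$ projecting onto $u_1\cdots u_k$, completing the argument. The proof is essentially routine probabilistic bookkeeping; the only point requiring care is verifying that $\cU$ can be chosen so that every coordinate of the concentration vector is uniformly lower-bounded, which the positivity of $x^*$ readily supplies.
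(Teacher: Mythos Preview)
Your proof is correct and follows the same high-level structure as the paper's: condition on $\mathcal{E}_0$, use that every $|\pi^{-1}(u_i)|$ is at least a constant times~$n$, and then greedily extend the path one vertex at a time. The difference lies in how the existence of a neighbor in the next block is justified. The paper invokes Lemma~\ref{lem:RGfacts} to conclude that each induced bipartite graph on $\pi^{-1}(u_i)\cup\pi^{-1}(u_{i+1})$ is connected \as, and then uses connectedness to find the next vertex. You instead give the direct estimate $(1-p_i)^{N_{i+1}}\le(1-p_i)^{cn}$ and a union bound over at most $n(k-1)$ vertices, which already shows that \as\ \emph{every} node of $\pi^{-1}(u_i)$ has a neighbor in $\pi^{-1}(u_{i+1})$. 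Your route is more elementary and self-contained (it does not require the bipartite connectivity lemma), and in fact proves a slightly stronger statement than mere connectedness; the paper's route, on the other hand, reuses a lemma that is needed elsewhere anyway. One small presentational point: you do not need to ``take $\cU$ small enough''; the set $\cU$ is already fixed (cf.\ Remark~\ref{rem:cucup}) and lies in $\rint\Delta^{q-1}$, so the uniform lower bound on coordinates is automatic.
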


\begin{proof}
Since the closed set $\cU$ is in the interior of $\Delta^{q-1}$, %it follows that %for any $x\in \cU$, the ratios $x_i/x_j$ are uniformly bounded below and above by positive numbers, i.e., 
there exists a $\kappa\geq 1$ such that for all $x\in \cU$, $$\frac{1}{\kappa} \leq \frac{x_i}{x_j}  \leq \kappa, \mbox{ for all } 1\leq i,j \leq q.
$$
Thus, by conditioning on $\mathcal{E}_0$, we have that 
$$
\frac{1}{\kappa} \leq \frac{x_i(G_n)}{x_j(G_n)} = \frac{|\pi^{-1}(u_i)|}{|\pi^{-1}(u_j)|} \leq \kappa, \mbox{ for all }1\leq i, j \leq q.
$$
It then follows that the subgraphs of $G_n$ induced by $\pi^{-1}(u_i) \cup \pi^{-1}(u_{i+1})$ are bipartite and satisfy the hypothesis of Lemma~\ref{lem:RGfacts}, for $1 \leq i \leq k-1$.  
Hence, it is \as~that all of these bipartite graphs are connected.  
We now pick an arbitrary node $v_1 \in \pi^{-1}(u_1)$; 
by the above arguments, we can find $v_2 \in \pi^{-1}(u_2)$ so that $(v_1,v_2)\in G_n$ \as. Iterating this procedure, we obtain the path in $G_n$ sought.
\end{proof}

Now, let $D_1,\ldots, D_m$ be the cycles in $H$ whose lengths are greater than $2$, and $C_1,\ldots, C_m$ be the corresponding undirected cycles in $G_n^s$.  
From items~\ref{it:upboundsimp3} and~\ref{it:upboundlengthcycle} of Proposition~\ref{prop:Aham}, the number $m$ of these cycles, as well as their lengths, are each  uniformly bounded above by constants {\em independent} of $n$.

Let $\ce_1$ be the event that the cycles $C_1,\ldots, C_m$ exist in $G_n$; more precisely,  it is the event that there exist disjoint cycles $C'_i$ in $G_n$ such that $\pi(C'_i) = \pi^s(C_i)$ for all $i = 1,\ldots,m$. 
We have the following lemma:

\begin{lemma}
The event $\ce_1$ is true \as.  
\end{lemma}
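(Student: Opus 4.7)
The plan is to construct the cycles $C_1',\ldots,C_m'$ one at a time by a second-moment/union-bound argument, exploiting two structural features from Proposition~\ref{prop:Aham}: each $C_i$ is simple (item~\ref{it:cycle3simple}), so $\pi^s(C_i)$ is an actual cycle in $S$ (not merely a closed walk); and the number $m$ of such cycles together with their lengths $k_i:=|V(C_i)|$ are uniformly bounded by constants depending only on $|F|$ (items~\ref{it:upboundsimp3} and~\ref{it:upboundlengthcycle}).

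The central step is the following claim. Fix any cycle $u_{j_1}u_{j_2}\cdots u_{j_k}u_{j_1}$ of length $k\geq 3$ in $S$, and let $Z$ count the ordered tuples $(v_1,\ldots,v_k)$ of pairwise distinct nodes of $G_n$ with $\pi(v_\ell)=u_{j_\ell}$ and $(v_\ell,v_{\ell+1})\in E_n$ for all $\ell$ taken mod $k$; then there exists $c>0$, depending on $W$, $S$ and $\cU$ but not on $n$, such that $Z\geq c\, n^k$ a.s. To prove this, I would condition on $\ce_0$, which guarantees $|\pi^{-1}(u_j)|\geq \underline{x}\, n$ for some constant $\underline{x}>0$; and observe that each step in the cycle corresponds to a rectangle on which $W$ is constant with value at least some $\underline{w}>0$, since the edges of $S$ are exactly the rectangles on which $W$ does not vanish. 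A direct count then gives $\E[Z\mid\ce_0]\geq c\,n^k$. For the variance, tuples sharing no position have independent edge-indicators and contribute zero covariance, whereas pairs of tuples sharing at least one position number at most $O(n^{2k-1})$; hence $\var(Z\mid\ce_0)=O(n^{2k-1})$. Chebyshev then yields $\bP(Z\geq c\,n^k/2\mid\ce_0)\to 1$, proving the claim.

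With the claim in hand, I would apply it to each of the finitely many cycles $\pi^s(C_1),\ldots,\pi^s(C_m)$ and take a union bound, obtaining a single a.s.\ event on which $Z_i\geq c_i\, n^{k_i}/2$ for every $i=1,\ldots,m$. On this event I would greedily select vertex-disjoint $C_1',\ldots,C_m'$: having fixed $C_1',\ldots,C_{i-1}'$, the set $V^\star:=V(C_1')\cup\cdots\cup V(C_{i-1}')$ has cardinality at most $K:=m\cdot 2|F|$, a constant independent of $n$; the number of $Z_i$-tuples that meet $V^\star$ is bounded by $K\cdot k_i\cdot n^{k_i-1}$ (choose which position $\ell$ to fix, which node of $V^\star$ to place there, and fill the remaining $k_i-1$ positions freely), which is $o(n^{k_i})$ and, for $n$ large, strictly less than $c_i n^{k_i}/2$. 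Hence a $Z_i$-tuple disjoint from $V^\star$ remains and serves as $C_i'$. The resulting $C_1',\ldots,C_m'$ are pairwise vertex-disjoint cycles in $G_n$ with $\pi(C_i')=\pi^s(C_i)$, which is exactly the event $\ce_1$.

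The main technical obstacle is the variance estimate for $Z$, a standard but somewhat tedious overlap count over pairs of ordered $k$-tuples; a secondary subtlety---that the set $V^\star$ to avoid depends on the random cycles selected in earlier steps---is sidestepped by establishing the simultaneous lower bound on all $Z_i$ via union bound first, so that the greedy step in the last paragraph becomes a purely deterministic argument.
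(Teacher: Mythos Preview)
Your argument is correct and takes a genuinely different route from the paper. The paper builds one cycle at a time: for $\pi^s(C_1)=u_1\cdots u_ku_1$, it first invokes Lemma~\ref{lem:pathsimpleGn} (which in turn rests on Lemma~\ref{lem:RGfacts} about connectivity of random bipartite graphs) to find a path $v_1\cdots v_{k-1}$ in $G_n$, and then closes the cycle by exhibiting a vertex in $\pi^{-1}(u_k)$ adjacent to both endpoints via the direct estimate $1-(1-p_{1k}p_{k-1,k})^{|\pi^{-1}(u_k)|}$; it then removes $C_1'$ and iterates on the smaller graph. Your approach instead counts \emph{all} ordered copies of each simple cycle and uses a second-moment bound to show $Z_i\ge c_i n^{k_i}/2$ a.s.\ simultaneously for all $i$, after which the disjointness is handled by a purely deterministic pigeonhole argument. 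The simplicity of $\pi^s(C_i)$ is what makes your zero-covariance claim for position-disjoint tuples valid (position-disjoint implies vertex-disjoint, hence edge-disjoint). The paper's route is lighter probabilistically (no variance computation) and reuses lemmas already in place; your route is more self-contained, avoids the sequential conditioning on $\ce_{11},\ce_{12},\ldots$, and yields a quantitative abundance of cycles rather than just one. One point you glide over: the cycles $\pi^s(C_i)$ themselves depend on $x(G_n)$ and hence are random; this is harmless because $S$ has only finitely many cycles of length at most $2|F|$ and your constants $c_i$ can be taken uniform over this finite set, so the union bound still applies---but it is worth saying explicitly.
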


\begin{proof}
Let $\ce_{11}$ be the event that there exists a cycle $C'_1 \in G_n$ with $\pi(C'_1)=\pi^s(C_1)$. 
We show that $\ce_{11}$ holds \as.  
To start, we write explicitly $\pi^s(C_1) = u_1\ldots u_k u_1$. 
Since $C_1$ is {simple}, $u_1\ldots u_k u_1$ is a {\em cycle} in $S$.  
By Lemma~\ref{lem:pathsimpleGn}, there exist  \as~nodes $v_i \in \pi^{-1}(u_i)$, for $1\leq i \leq k-1$, such that $v_1\cdots v_{k-1}$ is a path in $G_n$.

In order to obtain the cycle $C'_1$, it remains to exhibit a node  $v_k \in \pi^{-1}(u_k)$ that is connected to both $v_1$ and $v_{k-1}$ in $G_n$. 
We claim that such a node exists with probability  at least
\begin{equation}\label{eq:probclosure}
1 - (1-p_{1k}p_{k-1,k})^{|\pi^{-1}(u_k)|}
\end{equation}
where $p_{ij}>0$ is the value of the step-graphon $W$ over the rectangle $ [\sigma_{i-1},\sigma_{i})\times [\sigma_{j-1},\sigma_{j})$.
The claim holds because the probability that {\em no} node of $\pi^{-1}(u_k)$ connects to {\em both} $v_1$ and $v_{k-1}$ is given by $(1-p_{1k}p_{k-1,k})^{|\pi^{-1}(u_k)|}$.  
Thus, the probability of the complementary event is give by~\eqref{eq:probclosure}. 

Next, recall that $\underline a >0$ is the uniform lower bound for the nonzero entries of $A(x)$, for all $x\in \cU^*$, introduced in item~\ref{it:underlinea} of Proposition~\ref{prop:propnijn}. 
Because $x(G_n) = A(x(G_n)) \bfo $, every entry of $x(G_n)$ is bounded below by $\underline a$ as well, so 
\begin{equation}\label{eq:boundsonVi}
|\pi^{-1}(u_i)| \ge \underline a n, \mbox{ for all }  i = 1,\ldots, q.
\end{equation} 
Thus, the expression~\eqref{eq:probclosure} can be  lower bounded by
$$
1 - (1-p_{1k}p_{k-1,k})^{|\pi^{-1}(u_k)|} \geq 1-(1- p^2)^{\underline a n}, 
$$
where $p:= \min \{p_{ij} \mid (u_i,u_j)\in F\} > 0$. 
Note that the  right-hand-side of the above equation converges to $1$ as $n\to\infty$, so $\ce_{11}$ is true \as.

Let $n':= n - |C_1|$. Conditioning on the event $\ce_{11}$, we let $G'_{n'}$ be the subgraph of $G_n$ induced by the nodes  not in $C'_1$. 
Similarly as above, we have that there is a cycle $C'_2$, with $\pi(C'_2)=\pi(C_2)$, in $G'_{n'}$ \as~(note that $n\to\infty$ implies $n'\to\infty$). Iterating this argument for finitely many steps, we have that $\ce_1$ is true \as.
\end{proof}

In the sequel, we condition on the event $\ce_1$ and let $D'_1,\ldots, D'_m$ be the directed cycles in $\vec G_n$ corresponding to $D_1,\ldots D_m$ in $H$ of $\vec G^s_n$.  

\vspace{.2cm}

\noindent
{\em Part 2: On 2-cycles.} 
Let $n':=n- \sum^m_{i = 1} |C'_i|$, and $G'_{n'}$ be the subgraph of $G_n$ induced by the nodes that do {\em not} belong to any of the above cycles $C'_i$, and $G'^s_{n'}$ be its saturation. 
By removing the cycles $D_i$ out of $H$,  
we obtain a Hamiltonian decomposition $H'$ for $\vec G'^s_{n'}$, which is comprised  only of 2-cycles. It now suffices to show that $H'$ appears, up to relabeling, in $\vec G'_{n'}$ \as. 

%The first step is to use $H'$ to decompose the node set of $\vec G'_{n'}$: 
Let $V_{ij} \subset \pi^{-1}(u_i)$ be the set of nodes paired  to nodes in $\pi^{-1}(u_j)$ by $H'$ in $\vec G'^s_{n'}$.  
Since $H'$ is a Hamiltonian decomposition, $\pi^{-1}(u_i)$ can be expressed as the disjoint union of the $V_{ij}$'s, for $u_j$ such that $(u_i,u_j) \in F$. 
By items~\ref{it:lowerboundselfloop} and~\ref{it:lowerboundedges} of Proposition~\ref{prop:Aham}, the cardinality of $V_{ij}$, which is the same as the cardinality of $V_{ji}$, is at least $m_{ij}:= n\min\{a_{ij}, a_{ji}\}$. 
%Because $n_{ij} = n a_{ij}$ and 
Because the nonzero $a_{ij}$'s are bounded below by $\underline a$ by item~\ref{it:underlinea} of Proposition~\ref{prop:propnijn}, we have that $m_{ij}  \ge \underline a n$. 

Suppose that $u_i$ has a self-loop; then, we let $K_i$ be the subgraph  of $G'_{n'}$ induced by the nodes $V_{ii}$.  The graph $K_i$ is an Erd\H os-R\'enyi graph with parameter $p_{ii} >0$ and, by item~\ref{it:lowerboundselfloop} of Proposition~\ref{prop:Aham}, $|V_{ii}| = m_{ii}$ is an even integer. Since $n\to\infty$ implies that $m_{ii}\to\infty$,  $K_i$ has a perfect matching \as. This holds because one can split the node set $V_{ii}$ into two disjoint subsets of equal cardinality and apply Lemma~\ref{lem:RGfacts}. 
In other words, it is \as.~that there are  $m_{ii}/2$, for $i = 1,\ldots, q$, disjoint 2-cycles in $\vec G'_{n'}$ pairing nodes in $\pi^{-1}(u_i)$. 

Suppose that $(u_i,u_j)$ is an edge between two distinct nodes; then, we 
let $B_{ij}:=B(|V_{ij}|, |V_{ji}|, p_{ij})$ be the bipartite graph in $G'_{n'}$ induced by $V_{ij} \cup V_{ji}$ (recall from above that $|V_{ij}| = |V_{ji}|$). Let $\ce_{ij}$ be the event that $B_{ij}$ has a perfect matching. 
Since $m_{ij}\ge \underline a n$, by Lemma~\ref{lem:RGfacts}, the event $\ce_{ij}$ holds \as~and, hence, it is \as~that there are $|V_{ij}|$ disjoint 2-cycles in $\vec G'_{n'}$ pairing nodes from $V_{ij}$ to $V_{ji}$. 

Since there are finitely many edges in $S$, by the above arguments, we conclude that $H'$ appears in $\vec G'_{n'}$ \as. This completes the proof. \hfill{\qed}

\section{Conclusions}
Hamiltonian decompositions underlie a wide range of structural properties of control systems, such as stability and ensemble controllability. We say that a graphon $W$ satisfies the $H$-property if graphs $G_n \sim W$ have a Hamiltonian decomposition almost surely. In a series of papers, of which this is the second, we exhibited necessary and  sufficient conditions for the $H$-property to hold for the class of step-graphons. These conditions are geometric and revealed the fact that $H$-property depends only on concentration vector and skeleton graph of $W$. When these two objects are given, one can reconstruct a step-graphon modulo the exact value of $W$ on its support, thus giving rise to an equivalence relation on the space of step-graphons. We  showed that the  $H$-property is essentially a ``zero-one''  property of the equivalence classes. The case of general graphons will be addressed in future work.

\bibliographystyle{ieeetr}
\bibliography{refs}

\appendix

\section{Analysis and Proof of Proposition~\ref{prop:invarianceofskeleton}}\label{app:prop1}
%\subsection{Refinements of partitions}\label{ssec:refinement}
We first have some preliminaries about {\em refinements} of partitions: given a partition $\sigma$, a {\em refinement} $\sigma'$ of $\sigma$, denoted by $\sigma\prec \sigma'$, is any sequence that has $\sigma$ as a proper subsequence. For example, $\sigma' = (0,1/2,3/4,1)$ is a refinement of $\sigma = (0,1/2,1)$. 
Given a step-graphon $W$, if $\sigma$ is a partition for $W$, then so is $\sigma'$.

We say that $\sigma'$ is a {\em one-step refinement} of $\sigma$ if it is a refinement with $|\sigma'|=|\sigma|+1$. Any refinement of $\sigma$ can be obtained by iterating one-step refinements. To fix ideas,  and without loss of generality, we consider the refinement of  $\sigma=(\sigma_0,\ldots, \sigma_q,\sigma_*)$ to $\sigma'=(\sigma_0,\ldots, \sigma_q,\sigma_{q+1}, \sigma_*)$ with $\sigma_{q} < \sigma_{q+1} < \sigma_*$. 
If $S = (U,F)$, then $S'=(U',F')$, the skeleton graph of $W$ for $\sigma'$, is given by
\begin{equation}\label{eq:splitnode}
\left\{
\begin{aligned}    
    U' & = U  \cup  \{ u_{q+1}\}, \\
    F' & = F \cup  \{ (u_i, u_{q+1}) \mid (u_i,u_q)\in F \} 
    \cup \{(u_{q+1}, u_{q+1}) \mbox{ if } (u_q, u_q)\in F\}.
\end{aligned}
\right.
\end{equation}
In essence, the node $u_{q+1}$ is a copy of the node $u_q$. If there is a  loop $(u_q,u_q)$ in $F$, then $u_q$ and $u_{q+1}$ are also connected and each has a self-loop. See Figure~\ref{fig:split} for illustration. 
We say that a one-step refinement {\em splits a node} (here, $u_q$).

\begin{figure}
    \centering
    \subfloat[\label{sfig1:beforesplit}][Graph $S$]{
\includegraphics{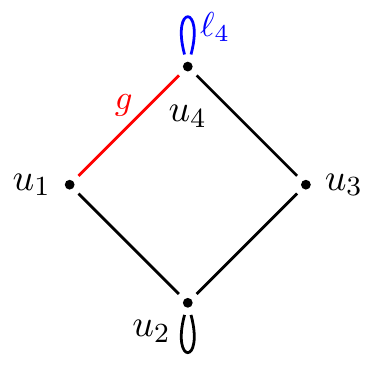}
}
\qquad
\subfloat[\label{sfig1:aftersplit}][Graph $S'$]{
   \includegraphics{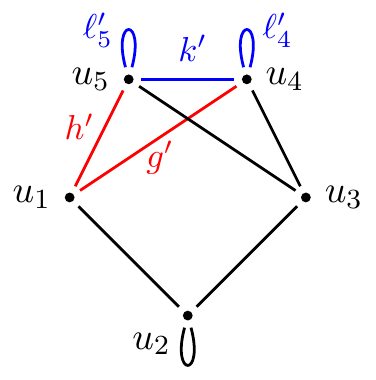}
}
\caption{ The graph $S'$ on the right is obtained from the left $S$ via a one-step refinement. The node $u_4$ in $S$ is split into $u_4$ and $u_5$. 
Because  $g=(u_1,u_4)$ is an edge in $S$, there exist two edges $g'=(u_1,u_4)$ and $h'=(u_1,u_5)$ in $S'$. Because $u_4$ has a self-loop $\ell_4$ in $S$, both $u_4$ and $u_5$ have self-loops in $S'$, denoted by $\ell'_4$ and $\ell'_5$, respectively. In addition, we have the edge $k'=(u_4,u_5)$ in $S'$.}
    \label{fig:split}
\end{figure}

We now prove Proposition~\ref{prop:invarianceofskeleton}: 

\begin{proof}[Proof of Proposition~\ref{prop:invarianceofskeleton}] 
Let $\sigma$ and $\sigma'$ be as given in the statement of the proposition. 
It should be clear that there exists another partition $\sigma''$ which is a refinement of both $\sigma'$ and $\sigma$ and that $\sigma''$ can be obtained via a sequence of one-step refinements starting with either $\sigma'$ or $\sigma$. Thus, combining the arguments at the beginning of the section, we can assume, without loss of generality, that $\sigma'$ is a {\em one-step} refinement of $\sigma$ obtained by splitting the node $u_1 \in U$. 

Let $x^*$ and $x'^*$ be the concentration vectors for $\sigma$ and $\sigma'$, $S$ and $S'$ be the corresponding skeleton graphs, and $Z$ and $Z'$ be the corresponding incidence matrices. 
Note that $Z'$ has one more row than $Z$ does due to the addition of the new node $u_{q+1}$; here, we let the last row of $Z'$ correspond to that node. It should be clear that $Z'$ contains $Z$ as a submatrix. 
For clarity of presentation, we use $f$ (resp. $f'$) to denote edges of $S$ (resp. $S'$). Since the graph $S$ can be realized as a subgraph of $S'$ in a natural way, we will write on occasion $f'\in F$ if $f'$ is an edge of $S$.  

We now prove the invariance of each item listed in the statement of Proposition~\ref{prop:invarianceofskeleton} under one-step refinements.
The proofs of the first two items are direct consequence of the definition of one-step refinement.

\mab{Proof for item 1):} If $S$ is connected, then  from~\eqref{eq:splitnode} we obtain that there exists a path from any node $u_i \in F$ to the new node $u_{q+1}$, so $S'$ is also connected. Reciprocally, assume that $S$ has at least two connected components. 
Then, the node $u_{q+1}$ obtained by splitting $u_q$ will only be connected to nodes in the same component as $u_q$ by definition of $F'$. 

\mab{Proof for item 2):} If $S$ has an odd cycle, then so does $S'$ by~\eqref{eq:splitnode}. Reciprocally, we assume that $S$ is lacking an odd cycle. 
%Then, it is clear that $S'$ does not have a self-loop. 
We show that $S'$ has no odd cycle. Suppose, to the contrary, that it does.  The cycle must then contain the node $u_{q+1}$. Replacing $u_{q+1}$ with $u_q$ yields a closed walk of odd length in $S$.  Since a closed walk can be decomposed edge-wise into a union of cycles and since the length of the walk is the sum of the lengths of the constituent cycles, there must exist an odd cycle in $S$, which is a contradiction.

\mab{Proof for item 3):} We prove each direction of the statement separately:

\xc{\bf Part 1: $x\in \cX(S) \Rightarrow x'\in \cX(S')$ ($x\in \rint \cX(S) \Rightarrow x'\in \rint \cX(S')$):} 
For ease of presentation, we let $z_f$ (resp. $z'_{f'}$) be the edge of $Z$ (resp. $Z'$) corresponding to the element $f\in F$ (resp. $f'\in F'$), and $z_{f,i}$ be the $i$th entry of $z_f$. 
Because $\cX(S)$ is the convex hull of the columns of $Z$,  there exist coefficients $c_f\geq 0$, for $f\in F$, such that $x= \sum_{f\in F} c_f z_f$. 
If, further, $x\in \rint \cX(S)$, then these coefficients can be chosen to be strictly positive. 
We will use $c_f$ to construct $c'_{f'} \ge 0$, for $f' \in F'$, such that 
\begin{equation}\label{eq:x'c'f'z'}
x'=\sum_{f' \in F'} c'_{f'} z'_{f'}
\end{equation}
and show that $x'\in \rint \cX(S')$ if $x\in \rint \cX(S)$.  

To proceed, let $F_{u_q}$ be the set of edges incident to node $u_q$ in $S$. Similarly, let $F'_{u_q}$ and $F'_{u_{q+1}}$ be the sets of edges incident to $u_q$ and $u_{q+1}$ in $S'$, respectively. 
The coefficients $c'_{f'}$ are defined as follows:
\begin{enumerate}
    \item[a)] If $f' \notin F'_{u_q} \cup F'_{u_{q+1}}$, then $f'\in F$. Let $c'_{f'}:=c_{f'}$. 
    \item[b)] If $f'\in F'_{u_q}$ and $f' \neq (u_q,u_{q+1})$, then $f'\in F$. Let $c'_{f'} := \frac{\sigma_{q+1} - \sigma_q}{\sigma_* -\sigma_q} c_{f'}$.
    
    \item[c)] If $f'= (u_i, u_{q+1})$ and $u_i \neq u_q$, then we pick the $f \in F$ such that 
    $$f =\begin{cases} (u_i, u_q) & \mbox{ if }u_i \neq u_{q+1}, \\
    (u_q,u_q) & \mbox{ if } u_i = u_{q+1}.
    \end{cases}
    $$ 
   Let $c'_{f'} := \frac{\sigma_* - \sigma_{q+1}}{\sigma_* -\sigma_q} c_{f}$. 
    
    \item[d)] If $f' = (u_q,u_{q+1})$, then let $c'_{f'}:=0$.
\end{enumerate}

%% * -> q+1, 0 -> q, 1 -> * for sigma

With the coefficients as above, we prove entry-wise that~\eqref{eq:x'c'f'z'} holds. 
First, note that because we obtain $S'$ by splitting the last node $u_q$ of $S$, the $i$th entry of $x'$, for $1\leq i\leq q-1$, is equal to $x_i$, so $x'_{i}=x_i = (\sigma_{i} - \sigma_{i-1})$. For the $i$th entry of the right hand side of~\eqref{eq:x'c'f'z'}, we consider two cases: 
\vspace{.1cm}

\noindent
{\em Case 1: $u_{i}$ is not incident to $u_q$ in $S$.} In this case, $u_i$ is not incident to either $u_q$ or $u_{q+1}$ in $S'$. Consequently, $F'_{u_{i}} = F_{u_{i}}$ and $z'_{f,i} = z_{f,i}$ for all $f\in F_{u_i}$. Furthermore, by item (a), $c'_{f} = c_f$ for any $f\in F_{u_{i}}$.  
Thus, the $i$th entry of the right hand side of~\eqref{eq:x'c'f'z'} is given by
$$
\sum_{f'\in F'_{u_{i}}} c'_{f'} z'_{f',i} = \sum_{f\in F_{u_{i}}}  c_f z_{f,i} = x_{i}  = \sigma_{i} - \sigma_{i-1}.
$$
\vspace{.1cm}

\noindent
{\em Case 2: $u_{i}$ is incident to $u_q$ in $S$.} In this case, $u_i$ is incident to both $u_q$ and $u_{q+1}$ in~$S'$. 
Let $g':= (u_i,u_q)$ and $h':= (u_i,u_{q+1})$ be the corresponding edges in~$S'$, see Fig.~\ref{fig:split} for an illustration. %and 
%$\tilde F'_{u_{i}}:= F'_{u_{i}}- \{g', h'\}$. 
Then, the $i$th entry of the right hand side of~\eqref{eq:x'c'f'z'} is given by
\begin{equation}\label{eq:ithentryofx'}
\sum_{f'\in F'_{u_{i}}} c'_{f'} z'_{f',i} = c'_{g'} z'_{g',i} + c'_{h'} z'_{h',i}+ \sum_{f'\in F'_{u_i} - \{g', h'\}} c'_{f'} z'_{f',i}.
\end{equation}
By items (b) and (c), 
$$c'_{g'} = \frac{\sigma_{q+1} - \sigma_q}{\sigma_* - \sigma_q} c_{g'} \quad \mbox{and}  \quad c'_{h'} = \frac{\sigma_* - \sigma_{q+1}}{\sigma_* - \sigma_q} c_{h'}.$$ 
Also, note that  
$$z'_{g',i}=z'_{h',i}= z_{g',i} = \frac{1}{2}.$$
Thus, the sum of the first two terms on the right hand side of~\eqref{eq:ithentryofx'} is $c_{g'} z_{g',i}$. For the last term,  note that $F'_{u_i} - \{g',h'\}=F_{u_i} - \{g'\}$. Also, by item (a) and the fact that $z'_{f,i} = z_{f,i}$ for any $f\in F_{u_i}$,  
$$
\sum_{f'\in F'_{u_i} - \{g',h'\}} c'_{f'} z'_{f',i} = \sum_{f\in F_{u_i} -\{g'\}} c_{f} z_{f,i}. 
$$
Combining the above arguments, we have that the right hand side of~\eqref{eq:ithentryofx'} is given by
$$
\sum_{f\in F_{u_{i}}} c_f z_{f,i} = x_{i} = \sigma_{i} - \sigma_{i-1}.
$$

Next, the $q$th entry of $x'$ is $(\sigma_{q+1} - \sigma_q)$ and the $q$th entry of the right hand side of~\eqref{eq:x'c'f'z'} is 
\begin{equation*}
\begin{aligned}
\sum_{f'\in F'_{u_q}} c'_{f'} z'_{f',q} & = \frac{\sigma_{q+1} - \sigma_q}{\sigma_* -\sigma_q}\sum_{f\in F_{u_q}}  c_f z_{f,q}\\ & = \frac{\sigma_{q+1} - \sigma_q}{\sigma_* -\sigma_q} x_q =   \sigma_{q+1} -\sigma_q,
\end{aligned}
\end{equation*}
where the first equality follows from the fact that 
$$F'_{u_q} = F_{u_q} \cup \{(u_q,u_{q+1}) \mbox{ if } (u_q,u_q)\in F \},$$ 
items (b) and (d), and the last equality follows from the fact that $x_q = \sigma_{*} - \sigma_q$.

The last (i.e., $(q+1)$th) entry of $x'$ is $(\sigma_* - \sigma_{q+1})$. The last entry of the right hand side of~\eqref{eq:x'c'f'z'} is given by 
\begin{equation*}
\begin{aligned}
\sum_{f'\in F'_{u_{q+1}}} c'_{f'} z'_{f',q+1} & = \frac{\sigma_* - \sigma_{q+1}}{\sigma_* -\sigma_q}\sum_{f\in F_{u_q}} c_f z_{f,q} \\
& = \frac{\sigma_* - \sigma_{q+1}}{\sigma_* -\sigma_q} x_q = \sigma_* -\sigma_{q+1},
\end{aligned}
\end{equation*}
where the first equality follows from item (c) above.  
We have thus shown that~\eqref{eq:x'c'f'z'} holds. In particular, since $c'_{f'}$ are nonnegative by construction,~\eqref{eq:x'c'f'z'} implies that $x'\in \cX(S')$.

It now remains to show that if $x \in \rint \cX(S)$, then $x' \in \rint \cX(S')$. Assuming $x \in \rint \cX(S)$,  if $u_q$ does not have a self-loop in $S$, then the edge $(u_q,u_{q+1})$ does not exist in $S'$, so by items (a), (b), and (c), all coefficients $c'_{f'}$ are positive, which implies that $x' \in \rint \cX(S')$.

We now assume that $u_q$ has a self-loop in $S$. 
Then, $k':= (u_q,u_{q+1})$ is an edge in $S'$ (see Fig.~\ref{fig:split} for an illustration), and thus $c'_{k'}=0$ per item (d) above. 
In this case, both $u_q$ and $u_{q+1}$ have self-loops in $S'$. Denote these two self-loops by $\ell'_{q} := (u_q,u_q)$ and $\ell'_{q+1} := (u_{q+1}, u_{q+1})$. 
By~\eqref{eq:defZS}, we have that 
$$z'_{k'}= \frac{1}{2} (z'_{\ell'_{q}} +  z'_{\ell'_{q+1}}).$$ 
Since $c'_{\ell'_{q}}$ and $c'_{\ell'_{q+1}}$ are positive, there exists an $\epsilon> 0$ such that $\epsilon < c'_{\ell'_{q}}$ and $\epsilon < c'_{\ell'_{q+1}}$. It then follows that
\begin{equation}\label{eq:manyprimes}
c'_{\ell'_{q}} z'_{\ell'_{q}} + c'_{\ell'_{q+1}} z'_{\ell'_{q+1}} = 2\epsilon z'_{k'} + (c'_{\ell'_{q}} - \epsilon) z'_{\ell'_{q}} + (c'_{\ell'_{q+1}} - \epsilon) z'_{\ell'_{q+1}}.
\end{equation}
Plugging in~\eqref{eq:x'c'f'z'} the relation~\eqref{eq:manyprimes} shows that $x'$ can be written as a convex combination of the $z'_{f'}$, for $f'\in F'$, with all {\em positive} coefficients,  and thus  $x' \in \rint \cX(S')$.

\xc{\bf Part 2: $x'\in \cX(S')\Rightarrow  x\in \cX(S)$ ($x'\in \rint \cX(S')\Rightarrow  x\in \rint \cX(S)$). } 
Because $x'\in \cX(S')$ (resp. $x'\in \rint \cX(S')$), we can write $x' = \sum_{f'\in F'}c'_{f'} z'_{f'}$, with $c'_{f'} \geq 0$ (resp. $c'_{f'} > 0$), for all $f'\in F'$. We will use $c'_{f'}$ to construct $c_{f}$, for $f\in F$, so that 
\begin{equation}\label{eq:xczf}
x = \sum_{f\in F} c_f z_f.
\end{equation}
%Note that if the above equation holds, then $x\in \rint \cX(S)$. 
To this end, we define $c_f$ as follows:
\begin{enumerate}
    \item[e)] If $f$ is not incident to $u_q$ in $S$, 
    then let $c_f := c'_f$. 
    \item[f)] If $f = (u_i,u_q)$ and $u_i\neq u_q$, then $g':=(u_i,u_q)$ and $h':=(u_i,u_{q+1})$ are edges in $S'$, and let $c_f := c'_{g'} + c'_{h'}$.
    \item[g)] If $f = (u_q,u_q)$, then  $k':=(u_q,u_{q+1})$, $\ell'_{q} := (u_q,u_q)$, and $\ell'_{q+1} := (u_{q+1},u_{q+1})$ are edges in $S'$, and let $  c_f := c'_{k'}+ c'_{\ell'_{q}}+c'_{\ell'_{q+1}}$. 
\end{enumerate}
Note that all the coefficients $c_f$, for $f\in F$, defined above are nonnegative. Further, if all the $c'_{f'}$ are positive, i.e., $x'\in \rint \cX(S')$, then the $c_{f}$ are positive as well, which implies $x\in \rint \cX(S)$ provided that~\eqref{eq:xczf} holds.

We now show that the coefficients given above are such that~\eqref{eq:xczf} indeed holds. We do so by checking that~\eqref{eq:xczf}  holds for each entry.

For the $i$th entry, with $1 \leq i <q$, the left hand side of~\eqref{eq:xczf} is $x_i = (\sigma_i - \sigma_{i-1})$. For the right-hand side, if $(u_i,u_q)$ is an edge in $S$, then $g'$ and $h'$, as defined item~(f), are two edges in $S'$ and, consequently,  $F'_{u_i} = F_{u_i} \cup \{h'\}$.  
Note that $z_{f,i} = z'_{f,i}$ for all $f\in F_{u_i}$ and $$z_{g',i} = z'_{g',i} = z'_{h',i} = \frac{1}{2}.$$
Thus, by items (e) and (f), we have that
\begin{align*}
\sum_{f \in F} c_f z_{f,i} & =  c_{g'} z_{g',i} + \sum_{f \in F_{u_i}-\{g'\} }c_{f}z_{f,i} \\
& = c'_{g'} z'_{g',i} + c'_{h'} z'_{h',i} + \sum_{f'\in F'_{u_i} - \{g', h'\}} c'_{f'} z'_{f',i} \\
& = \sum_{f'\in F'_{u_i}} c'_{f'} z'_{f',i} = x'_{i} = (\sigma_{i} - \sigma_{i-1}).
\end{align*}

Finally, for the last entry, i.e., the $q$th entry, the left hand side of~\eqref{eq:xczf} is $x_q= \sigma_*-\sigma_q$. 
For the right hand side of~\eqref{eq:xczf}, we let $\ell_{q} := (u_q,u_q)$ be the loop on $u_q$ (if it exists in $S$) and thus have that
\begin{equation}\label{eq:firstentry}
\sum_{f\in F_{u_q}} c_f z_{f,q}  = c_{\ell_q} z_{\ell_{q},q} + \sum_{f\in F_{u_q} - \{\ell_{q}\}} c_f z_{f,q}. 
\end{equation}
Let $k'$, $\ell'_q$, and $\ell'_{q+1}$
be the three edges in $S'$ as defined in item~(g).  
Note that 
$$z_{\ell_{q},q} = z'_{\ell'_{q},q}=z'_{\ell'_{q+1},q+1}=2z'_{k',q} = 2z'_{k',q+1} = 1.$$
For the first term of~\eqref{eq:firstentry}, using item (g) and the above relations, we  obtain 
\begin{equation}\label{eq:cf11zf11}
c_{\ell_{q}} z_{\ell_{q},q} = c'_{\ell'_{q}} z'_{\ell'_{q},q} + c'_{\ell'_{q+1}}z'_{\ell'_{q+1},q+1} +c'_{k'}z'_{k',q} + c'_{k'}z'_{k',q+1}.
\end{equation}
For each addend in the second term of~\eqref{eq:firstentry}, the edge $g  = (u_i,u_q)$ in $S$, for some $u_i\neq u_q$,  has two corresponding edges in $S'$, namely  
$g' = (u_i,u_q)$ and 
$h' = (u_i,u_{q+1})$. 
Note that
$$
z_{g,q} = z'_{g',q} = z'_{h',q+1} = \frac{1}{2}.
$$
Then, by item (f), 
\begin{equation}\label{eq:cf1izf1i}
c_{g} z_{g,q} = c'_{g'} z'_{g',q} + c'_{h'}z'_{h',q+1}.  
\end{equation}
Combining~\eqref{eq:cf11zf11} and~\eqref{eq:cf1izf1i}, we obtain that
\begin{align*}
\sum_{f\in F_{u_q}} c_f z_{f,1} & = \sum_{f'\in F'_{u_q}} c'_{f'} z'_{f',q}  + 
\sum_{f'\in F'_{u_{q+1}}} c'_{f'} z'_{f',q+1}\notag \\ 
& = x'_q + x'_{q+1} = (\sigma_{q+1} - \sigma_q) + (\sigma_* - \sigma_{q+1}) \\
&= \sigma_* - \sigma_q.  \notag
\end{align*}
This concludes the proof.% \hfill\qed 
\end{proof}
%%%%%%%%%%%%%%%%%%%%%%%%%%%%%%%%%%%%%%%%%%%%%%%%%%%%%%%%%%%%%%%%%%%%%%%%%%%%%%%%%%%%%%%%%%%%%%%%

\section{Proof of Lemma~\ref{lem:nijnprelim}}\label{app:nijnprelim}
\xcb{Proof of item~1:} From the definitions of $\tau'_i(x)$, we have that 
\begin{equation}\label{eq:boundx0x0p} 
-\bfo \le n (\tau_0(x) - \tau'_0(x))  = n (\tau'_1(x) - \tau_1(x))\le \bfo.
\end{equation}
If $\tau_{0,i}(x) = 0$, then $\tau'_{0,i}(x) = 0$, where $\tau_{0,i}(x)$ is the $i$th entry of the vector $\tau_0(x)$. Otherwise, by the definition of the incidence matrix~\eqref{eq:defZS} and by \eqref{eq:defx0x1} and~\eqref{eq:defalphalowerbound}, we have that $\tau_{0,i}(x)\ge \alpha$. For the latter case, by~\eqref{eq:boundx0x0p} and the hypothesis on $n$ in the statement of Proposition~\ref{prop:propnijn}, we have that 
\begin{equation}\label{eq:lowerboundx0prime}
\tau'_{0,i}(x) \ge \tau_{0,i}(x) - \frac{1}{n} \ge (1 -   \epsilon/8) \alpha > 0.
\end{equation}
It then follows that
\begin{equation}\label{eq:suppx0'x0}
\supp \tau'_0(x) = \supp \tau_0(x). 
\end{equation}
% and that the nonzero entries of $x'_0$ are uniformly bounded below. 
% %and, hence, by Lemma~\ref{lem:supportx0x1}, $\supp x'_0$ is constant over $\cU$. 
% This proves the part of item~1 for $x'_0$. 

Similarly, for $\tau_1(x)$, using~\eqref{eq:defZS},~\eqref{eq:defx0x1},  and~\eqref{eq:defalphalowerbound}, we have that $\tau_1(x)\geq \frac{1}{2}\alpha\bfo$. Then, again,  by~\eqref{eq:boundx0x0p} and the hypothesis on $n$,  
$$
\tau'_1(x) \ge \tau_1(x) - \frac{1}{n}\bfo \ge  \frac{1}{2} (1 - \epsilon /4 )  \alpha \bfo > 0, 
$$
from which we conclude that $$\supp \tau'_1(x) = \supp \tau_1(x) = \{1,\ldots,q\}.$$ This concludes the proof of the first item.

\xc{Proof of item~2:}
% Let $n'_0 := n \|x'_0\|_1$ and $n_1':= n \|x'_1\|_1$, so $n'_0 + n_1' = n$. 
By~\eqref{eq:lowerboundx0prime},~\eqref{eq:suppx0'x0}, and Lemma~\ref{lem:supportx0x1}, the ratio $n'_0/n$ is uniformly lower bounded by 
\begin{equation}\label{eq:lowerboundn0n}
\frac{n'_0}{n} =  \|\tau'_0(x)\|_1 \ge (1 -  \epsilon/8) \alpha |\supp \tau'_0(x)| = (1 -  \epsilon/8) \alpha |F_0|.
\end{equation}
To obtain a lower bound for $n_1'/n$, we let 
$$\eta:= \tau_1(x) - \tau'_1(x).$$ 
From~\eqref{eq:boundx0x0p} and the hypothesis on $n$, we have that 
$$
\|\eta\|_1\le \frac{q}{n} \le \frac{1}{8} q \alpha\epsilon  \le \frac{1}{8}q\alpha.
$$ 
It then follows that
\begin{equation}\label{eq:n1overn}
\frac{n_1'}{n} \ge  \|\tau_1(x)\|_1 - \|\eta\|_1 \ge \frac{1}{2}q\alpha - \frac{1}{8} q \alpha = \frac{3}{8} q\alpha. 
\end{equation}
This concludes the proof of the second item.

\xc{Proof of item~3:}
By~\eqref{eq:boundx0x0p} and the hypothesis on $n$, $\eta$ as introduced above satisfies
$$\|\eta\|_\infty \leq 1/n < \alpha\epsilon/8 < \epsilon.$$ 
Because $\bar \tau_1'(x) = \overline{\tau_1(x) + \eta}$ and $\|\eta\|_\infty < \epsilon$, we have that $\bar \tau'_1(x)\in \cV$ by Lemma~\ref{lem:xbx1epsilon}. \hfill{\qed}
%%%%%%%%%%%%%%%%%%%%%%%%%%%%%%%%%%%%%%%%%%%%%%%%%%%%%%%%%

\section{Proof of Lemma~\ref{lem:redH}}\label{app:prop3}

%\begin{proof}[Proof of Lemma~\ref{lem:redH}]
The proof is carried out by induction on $n'$. If $n'=0$, then $G'$ is the empty graph and there is nothing to prove. For the inductive step, we  set $n'>0$ and  assume that the lemma holds for all $n'' < n'$, and prove it for $n'$. 

To proceed, we use $A'$ to turn $\vec S$ into a weighted digraph on $q$ nodes: we assign to edge $u_iu_j$ the weight $a'_{ij}$.  
Then, $\vec S$ is a balanced graph, i.e.,  
\begin{equation}\label{eq:balanceequation}
\sum_{u_j\in N_-(u_i)}a'_{ij} = \sum_{u_j\in N_+(u_i)}a'_{ji}, \quad \forall i = 1,\ldots, q,
\end{equation}
where we recall $N_-(u_i)$ and $N_+(u_i)$ are the sets of out-neighbors and in-neighbors of $u_i$, respectively, in $\vec S$.

Let $\vec S'$ be the subgraph of $\vec S$ induced by the nodes incident to the edges with nonzero weights. 
Then,  $\vec S'$ has at least one cycle. To see this, note that if $\vec S'$ is acyclic, then by relabeling the nodes, the matrix $A'$ is upper-triangular and, from the hypothesis, $\diag A' = 0$.  
It follows that the only {\em nonnegative} solution $\{a'_{ij}\}$ to~\eqref{eq:balanceequation} is that all the $a'_{ij}$ are zero, which contradicts the fact that $A'$ is nonzero.   

Since $\vec S'$ is a subgraph of $\vec S$, any cycle of $\vec S'$ is also a cycle of $\vec S$; denote such cycle  by $D_S:=u_{i_1}\ldots u_{i_k}u_{i_1}$. By construction, the weights of the edges in the cycle are positive. It thus follows from $A'\bfo =x'$ that the entries $x'_{i_j}$, for $j = 1,\ldots, k$, are positive; together with the fact that  $G' = M(n'x',S)$, it implies that the sets $\pi^{-1}(u_{i_j}) \in G'$, for $j = 1,\ldots, k$, are non-empty.  
We next pick a node $v_{j}$ from each $\pi^{-1}(u_{i_j})$. 
Since the nodes $u_{i_1},\ldots, u_{i_k}$ are pairwise distinct, so are the nodes $v_1,\ldots, v_k$.
Also, since $G'$ is a complete $S$-multipartite graph, $D_G:=v_{1}\cdots v_{k}v_{1}$ is a cycle in $\vec G'$. Moreover, by construction, $\pi(D_G) = D_S$ and, hence, $D_G$ is simple.  

We let $G''$ be the graph obtained by removing from $G'$ the $k$ nodes $v_1,\ldots,v_k$, and the edges incident to them. Then, $G''$ is a complete $S$-multipartite graph on $n'':=n'-k$ nodes. 
Define 
$$
x'' :=\frac{1}{n''}(n'x' - \sum^k_{j = 1} e_{i_j}),
$$
where $\{e_{1},\ldots, e_q\}$ is the canonical basis of $\R^q$.  
Note that $x''\geq 0$; indeed, $n'x'$ is integer valued and $x'_{i_j}$, for $j = 1,\ldots,k$, are positive. 
We can then write $G''=M(n''x'',S)$.  
Correspondingly, we define a $q\times q$ matrix $A''$ as follows: 
$$
A'' := \frac{1}{n''} (n'A' - \sum^{k-1}_{j = 1} e_{i_j} e_{i_{j+1}}^\top - e_{i_k}e_{i_1}^\top).
$$
In words, to obtain $n''A''$, we decrease the $ij$th entry of $n'A'$, which is a positive integer, by one when the edge $u_i u_j$ is used in the cycle $D_S$ and keep the other entries unchanged.  

By construction, we have that $A'' \in \cA(S)$, with ${A''}\bfo=x''$, and $n'' A''$ is integer-valued. Because $n'' < n'$, we can appeal to the induction hypothesis and exhibit a Hamiltonian decomposition $H''$ of $\vec G''$ such that $\rho(H'')=A''$ and every cycle in $H''$ is simple. It is clear that adding the simple cycle $D_G$ to $H''$ yields a Hamiltonian decomposition $H'$ of $\vec G'$ with desired properties. This completes the proof.  \hfill{\qed}

\section{Proof of Lemma~\ref{lem:RGfacts}}\label{app:bipartite}
\xcb{1. Proof that $B(n_1,n_2,p)$ has a left-perfect matching a.s.:}
The proof of this part relies on the following statement, which is a consequence of a stronger result of Erd\H os and R\'enyi~\cite{erdos1964random}: For $p \in (0,1)$ a constant, the random bipartite graph $B(m,m,p)$ contains a perfect matching \as. Now, without loss of generality, we assume that $n_1\leq n_2$ and let $B(n_1,n_1,p)$ be a subgraph of $B(n_1,n_2,p)$. Since $n_2/n_1$ is bounded above by a constant $\kappa$, $n\to\infty$ implies that $n_1\to\infty$. Since $B(n_1,n_1,p)$ has a (left-)perfect matching \as, so does $B(n_1,n_2,p)$.

\xc{2. Proof that $B(n_1,n_2,p)$ is connected a.s.:} It is well known (see, e.g.,~\cite[Exercise 4.3.7]{frieze2016introduction}) that $B(m,m,p)$ is connected \as. We now extend the result to the general case where $n_1$ is not necessarily equal to $n_2$. Again, we can assume without loss of generality that $n_1\leq n_2$. 
Let $V_L=\{\alpha_1,\ldots,\alpha_{n_1}\}$ and $V_R=\{\beta_1,\ldots, \beta_{n_2}\}$ be the left- and right-node sets of $B$. 
Because $n_2/n_1\leq \kappa$, we can choose $\kappa$ subsets $V_{R,i} \subseteq V_R$, so that $|V_{R,i}|=n_1$ and  $\cup^\kappa_{i=1} V_{R,i}=V_R$.

Denote by $\mathcal{E}_i$ the event that the subgraph $B_i$ of $B:=B(n_1,n_2,p)$ induced by $V_{L}$ and $V_{R,i}$ is disconnected and by $\mathcal{E}$ the event that $B$ is disconnected. Note that if every $B_i$ is connected, then so is $B$. 
Conversely, we have that $\mathcal{E} \subseteq \cup_{i=1}^\kappa \ce_i$. 

Note that $B_i=(n_1,n_1,p)$ and, as argued above, $n_1\to\infty$ as $n\to\infty$. Since $B_i$ is connected \as, $\lim_{n \to \infty} \bP(\ce_i)=0$ and, hence, $$\lim_{n\to\infty}\bP(\ce) \le \lim_{n\to\infty}\sum^\kappa_{i = 1} \bP(\ce_i) = 0.$$ 
This completes the proof.\hfill \qed

\end{document}